\numberwithin{equation}{section}
\def\mE{\mathcal{E}}
\def\mF{\mathcal{F}}
\def\mH{\mathcal{H}}
\def\mM{\mathcal{M}}
\def\mN{\mathcal{N}}
\def\mW{\mathcal{W}}
\newcommand{\dd}{\mathrm{d}}
\newtheorem{thm}{Theorem}[section]
\newtheorem{lemma}[thm]{Lemma}
\newtheorem{prop}[thm]{Proposition}
\theoremstyle{definition}
\theoremstyle{definition}
\theoremstyle{definition}
\newtheorem{defn}[thm]{Definition}
\newcommand{\be}{\begin{eqnarray}}
\newcommand{\ee}{\end{eqnarray}}
\newcommand{\comment}[1]{}
\DeclareMathOperator{\kccw}{K-ccw_2}
\DeclareMathOperator{\kcw}{K-cw_2}
\DeclareMathOperator{\acw}{\widehat{A}-ccw_2}
\DeclareMathOperator{\aar}{\widehat{A}-area}
\begin{document}

\title{$\mathrm{K}$-cowaist on complete foliated manifolds}
 
\author{Guangxiang Su and Xiangsheng Wang}
\address{Chern Institute of Mathematics \& LPMC, Nankai
University, Tianjin 300071, P.R. China}
\email{guangxiangsu@nankai.edu.cn}
\address{School of Mathematics, Shandong University, Jinan, Shandong 250100, P.R. China}
\email{xiangsheng@sdu.edu.cn}

\begin{abstract}  
Let $(M,F)$ be a connected (not necessarily compact) foliated manifold carrying a complete Riemannian metric $g^{TM}$. We generalize Gromov's $\mathrm{K}$-cowaist using the coverings of $M$, as well as defining a closely related concept called the $\widehat{\mathrm{A}}$-cowaist. Let $k^F$ be the associated leafwise scalar curvature of $g^F = g^{TM}|_F$. We obtain some estimates on $k^F$ using these two concepts. In particular, assuming that the generalized $\mathrm{K}$-cowaist is infinity and either $TM$ or $F$ is spin, we show that $\inf(k^F)\leq 0$.
\end{abstract}

\maketitle

\section{Introduction} \label{s0}

\subsection{Main results}

Let $M$ be a closed connected oriented smooth Riemannian manifold. Let $E\to M$ be a Hermitian vector bundle with a Hermitian connection $\nabla^E$ and $R^E$ be the curvature of $\nabla^E$. If $\dim M$ is even, Gromov (\cite[\S 4]{Gr96} or~\cite[\S 4.1.4]{Gr}) defines the $\mathrm{K}$-cowaist\footnote{In~\cite{Gr96}, $\mathrm{K}$-cowaist was called $\mathrm{K}$-area. But recently, Gromov~\cite{Gr} suggests that $\mathrm{K}$-cowaist should be a more proper name for this concept.} of $M$ by
\begin{equation*}
  \kcw(M)=\sup_{E}(\|R^E\|^{-1}),
\end{equation*}
where $E\to M$ is a unitary bundle for which (at least) one characteristic (Chern) number of $E$ does not vanish. Gromov also generalizes the definition of the $\mathrm{K}$-cowaist to open manifolds by sticking to bundles $E\to M$ trivialized at infinity and using the characteristic numbers coming from the cohomology with compact supports. Moreover, if $\dim M$ is odd, Gromov defines
\begin{equation*}
  \kcw(M)=\sup_k \kcw(M\times \mathbb{R}^k),
\end{equation*}
where one takes those $k\geq 0$ such that $\dim M+k$ is even.

The $\mathrm{K}$-cowaist is closely related to the scalar curvature.
In \cite[\S 5$1\over 4$]{Gr96}, Gromov proves that every complete Riemannian spin manifold of dimension $n$ with the scalar curvature $k^{g^{TM}}\geq \varepsilon^{-2}$ satisfies
\begin{equation*}
\kcw(M)\leq {\rm const}_n \varepsilon^2.
\end{equation*}

In \cite[\S 9$2\over 3$]{Gr96}, Gromov also defines the $\mathrm{K}$-cowaist for foliated manifolds.
In this paper, for the case that $M$ is a foliated manifold again, we further generalize the definition of $\mathrm{K}$-cowaist by considering the coverings of $M$.
As in~\cite{Gr96}, we also
study the relation between the leafwise scalar curvature and this generalized $\mathrm{K}$-cowaist.

We now explain it in detail.
Let $M$ be a connected oriented (not necessarily compact) manifold carrying a {(not necessarily complete)} Riemannian metric $g^{TM}$. Let $F\subseteq TM$ be an integrable subbundle of $TM$ and $g^F=g^{TM}|_F$ be the restricted metric on $F$. In the following, we assume that both $\dim M$ and ${\rm rk}(F)$ are even.
If $\dim M$ is odd and ${\rm rk}(F)$ is even, we replace $M$ by $M\times S^1$. If ${\rm rk}(F)$ is odd, we replace $F$ by $F\oplus TS^1$ and $M$ by $M\times S^1\times S^1$ or $M\times S^1$ depending on whether $\dim M$ is even or odd.

We take $\widetilde{\pi}:\widetilde{M}\to M$ to be a covering of $M$ and $\widetilde{F}$ to be the lifted foliation on $\widetilde{M}$. Then $\widetilde{M}$ and $\widetilde{F}$ carry the lifted metrics $g^{T\widetilde{M}}$ and $g^{\widetilde{F}}$.

Let $(E,g^E,\nabla^E)$ be a Hermitian vector bundle over $\widetilde{M}$ with the Hermitian metric $g^E$ and the Hermitian connection $\nabla^E$. We assume that $E$ is trivial at infinity.

Let $R^{E}=(\nabla^{E})^2$ be the curvature of $\nabla^{E}$. Hence, for any $x\in \widetilde{M}$ and $\alpha,\beta\in T_x\widetilde{M}$, $R^{E}(\alpha\wedge \beta)\in \mathrm{End}(T_x\widetilde{M}).$
Recall that $\|R^{{E}}_{\widetilde{F}}\|$ is defined by\footnote{In the literature, there are different ways to define the norm of $R^{E}$ (or $R^{{E}}_{\widetilde{F}}$), eg~\cite{Baer_2023bo} and~\cite{Listing_2013ho}. Here, we adopt the same definition as in~\cite{Wang22}. However, for the content discussed in this paper, all these norms work.} (cf. \cite{Gr96}),
\begin{equation}
  \label{eq:def-rn}
  \|R^{{E}}_{\widetilde{F}}\| = \sup_{x\in \widetilde{M}} \sup_{\substack{\alpha,\beta\in \widetilde{F}_x \\ \alpha \perp \beta,\ |\alpha\wedge \beta|=1}}|R^{E}(\alpha\wedge \beta)|.
\end{equation}

Now, we can define a pair of closely related concepts.
\begin{defn}
  With above notation, if the vector bundle $E$ satisfies
some Chern number of $E$ is nonzero, we define the (\emph{covering}) \emph{$\mathrm{K}$-cowaist} of $(M,F)$ by 
  \begin{align}
    \kccw(M,F)=\sup _{\widetilde{M},E}(\|R^{{E}}_{\widetilde{F}}\|^{-1}).
  \end{align}
Similarly, if the vector bundle $E$ satisfies the inequality
  \begin{equation*}
    \int_{\widetilde{M}}\widehat{\mathrm{A}}(T\widetilde{M})({\rm ch}(E)-{\rm rk}(E))\neq 0,
  \end{equation*}
  we define the (\emph{covering}) \emph{$\widehat{\mathrm{A}}$-cowaist} of $(M,F)$ by
  \begin{align}
    \acw(M,F)=\sup _{\widetilde{M},E}(\|R^{{E}}_{\widetilde{F}}\|^{-1}).
  \end{align}
\end{defn}

From the index theory viewpoint, among the two concepts given as above,
the $\widehat{\mathrm{A}}$-cowaist perhaps relates to the scalar curvature more
directly. In fact, we can use $\widehat{\mathrm{A}}$-cowaist to give a
quantitative estimate on the leafwise scalar curvature.

\begin{thm}
 \label{thm:4.1}
  Let $M$ be a connected oriented (not necessarily compact) manifold carrying a complete Riemannian metric $g^{TM}$. Let $F\subseteq TM$ be an integrable subbundle of $TM$ with the restricted metric $g^F=g^{TM}|_{F}$. Let $k^F$ be the associated leafwise scalar curvature of $F$. If either $TM$ or $F$ is spin, then
  \begin{equation*}\label{4.3r}
        \inf(k^F)\leq \frac{2{\rm rk}(F)({\rm rk}(F)-1)}{ \acw(M,F)}.
  \end{equation*}
\end{thm}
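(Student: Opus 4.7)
The plan is to argue by contradiction, combining a Lichnerowicz-type vanishing for a leafwise Dirac-type operator on the covering $\widetilde{M}$ with a relative index theorem of Gromov--Lawson type.

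Suppose the conclusion fails; choose $\varepsilon>0$ such that
$\inf(k^F)>2\mathrm{rk}(F)(\mathrm{rk}(F)-1)/\acw(M,F)+4\varepsilon$.
By the definition of $\acw(M,F)$, there exist a covering $\widetilde{\pi}:\widetilde{M}\to M$ and a Hermitian bundle $(E,g^E,\nabla^E)$ on $\widetilde{M}$, trivial at infinity, with
$\int_{\widetilde{M}}\widehat{\mathrm{A}}(T\widetilde{M})(\ch(E)-\mathrm{rk}(E))\neq 0$
and $\|R^E_{\widetilde{F}}\|$ small enough that
\[
\tfrac14\inf(k^F)-\tfrac12\,\mathrm{rk}(F)(\mathrm{rk}(F)-1)\,\|R^E_{\widetilde{F}}\|\ \geq\ \varepsilon.
\]

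Next, I would lift the spin structure on $F$ or $TM$ to $\widetilde{M}$ and construct a leafwise sub-Dirac operator $\mathcal{D}$ on $\widetilde{M}$ in the spirit of Liu--Zhang, acting on $S(\widetilde{F})\otimes\Lambda^{*}\widetilde{F}^{\perp}$ (with an appropriate spin$^c$ modification if only $TM$ is spin). Twisting by $(E,\nabla^E)$ produces $\mathcal{D}_E$, whose Lichnerowicz-type identity reads
\[
\mathcal{D}_E^{\,2}=(\nabla^E)^{*}\nabla^E+\tfrac14 k^F+c(R^E)+\mathcal{R},
\]
where $|c(R^E)|\leq\tfrac12\,\mathrm{rk}(F)(\mathrm{rk}(F)-1)\|R^E_{\widetilde{F}}\|$ comes from the leafwise Clifford contraction of the twisting curvature, and $\mathcal{R}$ collects contributions from the second fundamental form of $\widetilde{F}$ and the transverse geometry. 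A rescaling of the transverse part of the metric (adiabatic limit) sends $\mathcal{R}$ to zero uniformly on $\widetilde{M}$, yielding $\mathcal{D}_E^{\,2}\geq\varepsilon\,\mathrm{Id}$ pointwise, and similarly $\mathcal{D}_0^{\,2}\geq\varepsilon\,\mathrm{Id}$ for the untwisted sub-Dirac of the same rank.

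Completeness of $g^{TM}$ (and hence of $g^{T\widetilde{M}}$) makes both operators essentially self-adjoint, and the uniform lower bound makes them invertible on $L^2(\widetilde{M})$. Since $E$ is trivial at infinity, $\mathcal{D}_E$ and $\mathcal{D}_0$ coincide outside a compact subset of $\widetilde{M}$, so the relative index is well defined in the Gromov--Lawson sense and vanishes. On the other hand, the relative index theorem computes
\[
\ind(\mathcal{D}_E)-\ind(\mathcal{D}_0)=\int_{\widetilde{M}}\widehat{\mathrm{A}}(T\widetilde{M})\bigl(\ch(E)-\mathrm{rk}(E)\bigr)\neq 0,
\]
a contradiction.

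The main obstacle is the construction of the sub-Dirac operator whose Lichnerowicz formula isolates the \emph{leafwise} scalar curvature $k^F$ (rather than the full $k^{T\widetilde{M}}$), so that only the constant $2\mathrm{rk}(F)(\mathrm{rk}(F)-1)$ appears in the estimate. For compact foliations this is treated by Liu--Zhang through a careful choice of connection on $S(\widetilde{F})\otimes\Lambda^{*}\widetilde{F}^{\perp}$ together with an adiabatic limit of the metric; in the present complete non-compact setting one must control that limit uniformly and simultaneously verify the hypotheses of the relative index theorem (completeness, uniform positivity of $\mathcal{D}_E^{\,2}$, and the trivial-at-infinity structure of $E$) throughout the procedure.
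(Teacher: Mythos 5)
The broad outline (contradiction, pull back to a covering, Lichnerowicz--type estimate for a sub-Dirac operator, index of a twisted operator $\neq 0$) is indeed what the paper does, but there is a genuine gap in the central analytic step, and a second route difference worth noting.

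The critical gap is in the sentence claiming ``a rescaling of the transverse part of the metric (adiabatic limit) sends $\mathcal{R}$ to zero uniformly on $\widetilde{M}$.'' For a general foliation $F$ the transverse metric is \emph{not} holonomy invariant, and the second fundamental form / mixed curvature terms in the sub-Dirac Bochner identity do \emph{not} vanish in the adiabatic limit of $g^{F^{\perp}}/\gamma^{2}$ on $\widetilde M$ itself; in fact they can blow up. This is precisely the obstruction that led Zhang (building on Connes) to work not on $\widetilde M$ but on the \emph{Connes fibration} $\pi:\mM\to\widetilde M$, where the lifted foliation $\mF$, the canonical splitting $T\mM=\mF\oplus\mF_1^\perp\oplus\mF_2^\perp$, and the metric $g^{T\mM}_{\beta,\gamma}=\beta^{2}g^{\mF}\oplus\gamma^{-2}g^{\mF_1^\perp}\oplus g^{\mF_2^\perp}$ make the remainder controllable, and where an extra Clifford deformation $\widehat c(\widetilde\sigma)/\beta$ is needed to compensate for the noncompact fibers. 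The approach you cite (Liu--Zhang) only handles the holonomy-invariant (almost Riemannian) case; for general $F$ the Connes fibration is unavoidable, and your argument silently assumes it is not needed.

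There is a second, smaller issue of route rather than correctness. You invoke the Gromov--Lawson relative index theorem directly on the complete manifold $\widetilde M$. The paper instead follows the other Gromov--Lawson mechanism: it truncates $\widetilde M$ at a large hypersurface $H_{3m}$, doubles to a closed manifold $\widehat M_{H_{3m}}$, glues the bundle using the trivialization at infinity to get $\widehat E$, passes to the (fiberwise-truncated and doubled) Connes fibration $\widehat{\mM}_{\mH_{3m},R}$, and there deforms the twisted sub-Dirac operator by $\widehat c(\widetilde\sigma)/\beta+\varepsilon\pi^{*}W/\beta$ (the endomorphism $W=\omega+\omega^{*}$ built from the trivialization at infinity is what makes the deformed operator invertible off a compact set). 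The final contradiction comes from a \emph{zeroth order} pseudodifferential operator $P^{\mE_{3m,R}}_{R,\beta,\gamma,+}$ whose symbolic index is computed by Atiyah--Singer. If you wished to keep a relative-index formulation, you would have to justify it for the twisted sub-Dirac operator on the noncompact Connes fibration over a noncompact covering (essential self-adjointness, uniform invertibility at infinity, and the index formula), none of which is addressed; the paper's doubling-plus-$\Psi$DO argument sidesteps these issues.

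In short: the proposal correctly identifies the scheme (pull back by the covering, isolate $k^F$ via a sub-Dirac Lichnerowicz identity, derive an index contradiction), but without the Connes fibration the key pointwise estimate $\mathcal D_E^{2}\ge\varepsilon$ is simply not obtainable for general $F$, and the handling of noncompactness (Gromov--Lawson doubling, the endomorphism $W$, the deformation by $\widehat c(\widetilde\sigma)$, and the $\Psi$DO index on the closed double) is missing.
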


Note that in the definition of $\widehat{\mathrm{A}}$-cowaist, we don't need
that $g^{TM}$ is complete. But for the above theorem, the completeness is
necessary.
As in \cite{Z17}, we only give the proof of Theorem \ref{thm:4.1} for the $TM$ spin case in detail. The $F$ spin case can be proved similarly as in \cite[\S 2.5]{Z17}.

To obtain a similar estimate using the $\mathrm{K}$-cowaist, we notice the following reinterpretation of the result in~\cite[\S~5$\frac{3}{8}$]{Gr96}.
One can also see~\cite{Baer_2023bo},~\cite{Listing_2013ho} or~\cite{Wang22} for a more detailed proof of this result.
\begin{prop}
  $\kccw(M,F) = +\infty$ implies $\acw(M,F) = +\infty$.
\end{prop}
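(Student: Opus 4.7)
The plan is to turn any K-cowaist witness bundle into an $\widehat{\mathrm{A}}$-cowaist witness by a K-theoretic manipulation that preserves small leafwise curvature. Fix $\epsilon > 0$. Since $\kccw(M,F) = +\infty$, we can choose a covering $\widetilde{\pi}\colon \widetilde{M}\to M$ and a Hermitian bundle $(E, g^E, \nabla^E)$ on $\widetilde{M}$, trivial at infinity, with $\|R^{E}_{\widetilde{F}}\| < \epsilon$ and some non-zero Chern number.

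Next I would construct a virtual bundle $E'$ (still trivial at infinity) from $E$ using direct sums, tensor products, and exterior powers, so that
\[
    \int_{\widetilde{M}} \widehat{\mathrm{A}}(T\widetilde{M}) \bigl(\ch(E') - {\rm rk}(E')\bigr) \neq 0.
\]
Since Chern classes and Chern characters generate the same rational ring of characteristic forms, the non-vanishing of some Chern number of $E$ is equivalent to the non-vanishing of some polynomial in the graded pieces $\ch_j(E)$ upon integration. The identity $\ch(E_1\otimes E_2) = \ch(E_1)\ch(E_2)$, together with the additivity of $\ch$ under direct sums and the standard expression of $\ch(\Lambda^k E)$ in Chern roots, lets me realize any such polynomial as the reduced Chern character of a virtual combination assembled from $E$. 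After absorbing the Pontryagin-class corrections contributed by the higher-degree components of $\widehat{\mathrm{A}}(T\widetilde{M})$ into the chosen polynomial, the resulting $E'$ satisfies the integral non-vanishing condition above.

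The curvature of $E'$ is then controlled by the elementary bounds
\[
  \|R^{E_1\oplus E_2}_{\widetilde{F}}\| \leq \max\bigl(\|R^{E_1}_{\widetilde{F}}\|,\|R^{E_2}_{\widetilde{F}}\|\bigr),\quad
  \|R^{E_1\otimes E_2}_{\widetilde{F}}\| \leq \|R^{E_1}_{\widetilde{F}}\| + \|R^{E_2}_{\widetilde{F}}\|,\quad
  \|R^{\Lambda^k E_1}_{\widetilde{F}}\| \leq k\|R^{E_1}_{\widetilde{F}}\|,
\]
which together give $\|R^{E'}_{\widetilde{F}}\| \leq C\epsilon$, with $C$ depending only on the fixed K-theoretic combination. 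Since $\epsilon>0$ was arbitrary, this yields $\acw(M,F) = +\infty$.

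The hard part is the K-theoretic construction above: one must verify that the subring of $H^{\mathrm{even}}_{c}(\widetilde{M};\mathbb{Q})$ generated by $\ch(E) - {\rm rk}(E)$ is not annihilated by $\widehat{\mathrm{A}}(T\widetilde{M})$ under the integration pairing, and that the required polynomial combination can be realized through concrete K-theoretic operations on $E$ trivialized at infinity. This is essentially an application of the rational Chern character isomorphism combined with the multiplicativity of $\ch$; once this step is secured, the curvature bookkeeping is routine.
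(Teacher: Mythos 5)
The overall strategy---turning a $\kccw$ witness into an $\acw$ witness by a K-theoretic combination whose curvature is at most a dimension-dependent multiple of $\|R^{E}_{\widetilde{F}}\|$---is indeed the standard one underlying Gromov's \S~5$\frac{3}{8}$ observation, which the paper simply cites (it gives no proof, deferring to \cite{Gr96} and \cite{Wang22}). However, the step you yourself flag as ``the hard part'' is where your sketch breaks down. You propose to ``absorb the Pontryagin-class corrections contributed by the higher-degree components of $\widehat{\mathrm{A}}(T\widetilde{M})$ into the chosen polynomial.'' But the top-degree component of $\widehat{\mathrm{A}}(T\widetilde{M})\bigl(\ch(E')-\rank(E')\bigr)$ has the shape $\ch_m(E')+\widehat{\mathrm{A}}_1\ch_{m-2}(E')+\cdots$ (with $\dim\widetilde{M}=2m$), and the correction terms involve Pontryagin classes of $T\widetilde{M}$, which are \emph{not} elements of the subring generated by the $\ch_j(E)$. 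There is no polynomial in the $\ch_j(E)$ whose integral you can ``subtract off'' to cancel them, so the absorption argument is not available as written. (Also, the subring you want is the one generated by the individual $\ch_j(E)$, $j\ge 1$, not by the single element $\ch(E)-\rank(E)$; these differ.)

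The correct resolution is to make the corrections \emph{vanish identically}, not to cancel them. Since $\ch_j=0$ in degree $>2m$, the Vandermonde trick with Adams operations produces, for each $1\le j\le m$, a rational class $\alpha_j=\sum_k a_k\,\psi^k E$ with $\ch(\alpha_j)=\ch_j(E)$ exactly, in particular $\rank\alpha_j=0$. For a weight-$m$ monomial $M=\ch_{j_1}\cdots\ch_{j_l}$ the element $\alpha_{j_1}\otimes\cdots\otimes\alpha_{j_l}$ then has Chern character concentrated purely in degree $n=2m$, so, because $\widehat{\mathrm{A}}_0=1$,
\begin{equation*}
  \int_{\widetilde{M}}\widehat{\mathrm{A}}(T\widetilde{M})\,\ch\bigl(\alpha_{j_1}\otimes\cdots\otimes\alpha_{j_l}\bigr)=\int_{\widetilde{M}} M\bigl(\ch_j(E)\bigr),
\end{equation*}
with no correction terms at all. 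A non-zero Chern number of $E$ forces at least one such monomial integral to be non-zero. Since the $\psi^k$ are fixed integer polynomials in the $\lambda^i$, and there are finitely many weight-$m$ monomials, the curvature of the resulting honest bundles (obtained by clearing denominators and splitting the virtual class into its positive and negative parts, both trivial at infinity) is bounded by $C\|R^{E}_{\widetilde{F}}\|$ with $C$ depending only on $m$, not on $E$; this is precisely the uniformity your ``curvature bookkeeping'' needs and which your sketch takes for granted. With this replacement for the absorption step, the argument is complete.
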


Due to this proposition, we have the following corollary of Theorem~\ref{thm:4.1}.
\begin{thm}
  \label{thm:ka}
  Under the same assumptions of Theorem~\ref{thm:4.1}, if we further assume that $\kccw(M, F) = +\infty$, then $\inf(k^F) \le 0$.
\end{thm}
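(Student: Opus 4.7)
The plan is to derive Theorem~\ref{thm:ka} as an immediate consequence of the preceding Proposition together with Theorem~\ref{thm:4.1}; the genuine content lies in those two inputs, both of which are available by hypothesis, so the argument reduces to bookkeeping.

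Concretely, I would proceed in three short steps. First, feed the hypothesis $\kccw(M,F)=+\infty$ into the Proposition to conclude $\acw(M,F)=+\infty$. Second, observe that the standing assumptions of Theorem~\ref{thm:ka} --- completeness of $g^{TM}$ together with the spin condition on either $TM$ or $F$ --- are exactly the hypotheses of Theorem~\ref{thm:4.1}, so that theorem applies and yields
\begin{equation*}
  \inf(k^F) \leq \frac{2\,{\rm rk}(F)({\rm rk}(F)-1)}{\acw(M,F)}.
\end{equation*}
Third, insert $\acw(M,F)=+\infty$ on the right-hand side to obtain $\inf(k^F)\leq 0$.

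The only point needing a word of care is how to interpret the quotient in the displayed inequality when the denominator is $+\infty$. The cleanest way is to revert to the definition of $\acw$ and restate Theorem~\ref{thm:4.1} in the equivalent pointwise form: for every admissible pair $(\widetilde{M},E)$ entering the definition of $\acw(M,F)$, one has $\inf(k^F)\leq 2\,{\rm rk}(F)({\rm rk}(F)-1)\,\|R^E_{\widetilde{F}}\|$. The assumption $\acw(M,F)=+\infty$ then supplies admissible pairs with $\|R^E_{\widetilde{F}}\|$ arbitrarily small, and letting this quantity tend to zero gives the result. I anticipate no serious obstacle here, since all the analytic weight has been absorbed into Theorem~\ref{thm:4.1} and the Proposition.
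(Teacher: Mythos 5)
Your proposal is correct and is essentially the argument the paper itself uses (the paper presents Theorem~\ref{thm:ka} as an immediate corollary of the Proposition and Theorem~\ref{thm:4.1}). Your extra remark about unwinding the division by $+\infty$ into the pointwise statement over admissible pairs $(\widetilde{M},E)$ is a reasonable clarification but does not depart from the paper's route.
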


Since $\kcw(M)=+\infty$ implies $\kccw(M,F)=+\infty$, as a corollary of Theorem \ref{thm:ka}, one can show the following result.
\begin{thm}{\rm (Gromov,~\cite[p.~258, footnote~277]{Gr})}
Complete manifolds $X$ with infinite $\kcw(X)$, carry no spin foliations $F$, where the induced Riemannian metrics in the leaves satisfy $k^F\geq \sigma>0$.
\end{thm}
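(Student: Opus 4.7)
The plan is to deduce the statement as a short corollary of Theorem~\ref{thm:ka}, via the elementary comparison $\kcw(X) = +\infty \;\Rightarrow\; \kccw(X, F) = +\infty$.

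I would argue by contradiction. Suppose $X$ is complete with $\kcw(X) = +\infty$ and carries a spin foliation $F$ whose leafwise scalar curvature satisfies $k^F \ge \sigma > 0$. If $\dim X$ or $\mathrm{rk}(F)$ is odd, I would first apply the parity reduction described at the start of Section~1, replacing $X$ by $X \times S^1$ (or $X \times S^1 \times S^1$) and, if $\mathrm{rk}(F)$ is odd, replacing $F$ by $F \oplus TS^1$. Since $TS^1$ is flat and trivially spin, all of the hypotheses (completeness, the spin condition on $F$, and $k^F \ge \sigma > 0$) are preserved, and $\kcw = +\infty$ also transfers by a standard product construction on bundles (pulling back realizing bundles from $X$ or $X \times \mathbb{R}^k$, and tensoring with a line bundle on the $S^1$ factor of non-trivial first Chern class where necessary).

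The heart of the argument is the inequality $\kcw(X) \le \kccw(X, F)$. Take the trivial covering $\widetilde{X} = X$ with $\widetilde{F} = F$, and let $E \to X$ be any Hermitian bundle (trivialized at infinity in the non-compact case) with some non-zero Chern number. Then $E$ is admissible in the definition of $\kccw(X, F)$, and directly from~\eqref{eq:def-rn} one has
\[
  \|R^E_{\widetilde{F}}\|
  \;=\; \sup_{x \in X}\; \sup_{\substack{\alpha,\beta \in F_x \\ \alpha \perp \beta,\ |\alpha\wedge \beta| = 1}} |R^E(\alpha \wedge \beta)|
  \;\le\; \|R^E\|,
\]
since the supremum on the left is taken over a subset of the unit $2$-vectors used to define $\|R^E\|$. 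Hence $\|R^E_{\widetilde{F}}\|^{-1} \ge \|R^E\|^{-1}$, and passing to the supremum over admissible bundles yields $\kccw(X, F) \ge \kcw(X) = +\infty$.

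With $\kccw(X, F) = +\infty$ and $F$ spin, Theorem~\ref{thm:ka} gives $\inf(k^F) \le 0$, contradicting $k^F \ge \sigma > 0$. The curvature-norm comparison is essentially tautological; the only step requiring a little care is the parity reduction, particularly in the odd-dimensional case where Gromov's definition of $\kcw$ goes through $X \times \mathbb{R}^k$ with bundles trivialized at infinity rather than through $X \times S^1$, so one must verify that a bundle witnessing $\kcw(X) = +\infty$ can be repackaged as a bundle on the enlarged manifold with the required non-vanishing Chern number. This is the main (and only) obstacle, and it is routine.
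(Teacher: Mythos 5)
Your proposal is correct and follows essentially the same route as the paper: after the parity normalization, note that $\kcw(X)=+\infty$ forces $\kccw(X,F)=+\infty$ because the trivial covering together with the elementary comparison $\|R^E_F\|\leq\|R^E\|$ shows $\kccw(X,F)\geq\kcw(X)$, and then invoke Theorem~\ref{thm:ka} (in its $F$-spin form) to contradict $k^F\geq\sigma>0$. The paper records precisely this chain of implications, stated even more tersely than you do, so your extra care about the odd-parity stabilization is consistent with, and slightly more explicit than, the paper's own treatment.
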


\subsection{A discussion about the definitions of $\widehat{\mathrm{A}}$-cowaist}
Compared to other similar concepts in the literature, a feature of the definition of $\widehat{\mathrm{A}}$-cowaist is that the supremum is calculated using bundles over any coverings of $M$ rather than bundles over $M$ alone.
Whether it is necessary to take the supremum on this larger set turns out to be a delicate problem.
For simplicity, we will assume $F = TM$ in this subsection.

To facilitate our discussion, we use the following definition which resembles the definition of $\widehat{\mathrm{A}}$-cowaist but dose not use the covering space.
Let $M$ be a manifold and $(E,g^E,\nabla^E)$ be a Hermitian vector bundle over $M$ with the Hermitian metric $g^E$ and the Hermitian connection $\nabla^E$.
If $M$ is non-compact, we also assume that $E$ is trivial at infinity.
Define
\begin{equation*}
  \aar(M) = \sup _{E}\Big\{\|R^{{E}}\|^{-1}| \textstyle\int_{{M}}\widehat{\mathrm{A}}(T{M})(\mathrm{ch}(E)-\mathrm{rk}(E))\neq 0\Big\}.
\end{equation*}

We discuss several cases separately.

\subsubsection*{Case 1: $M$ is non-compact.}
In this case, $\acw(M,TM)$ in strictly larger than $\aar(M)$ in general.
As a example, let $M$ be an annulus in $\mathbb{R}^2$ and $F = TM$.
By \cite[p.~33~(c)]{Gr101}, we have
\begin{equation}
  \label{eq:area-M}
  \kcw(M) = \mathrm{area}(M).
\end{equation}
Since, $\dim M = 2$, for any vector bundle $E$ over $M$, we have
\begin{equation*}
\int_{{M}}\widehat{\mathrm{A}}(T{M})(\mathrm{ch}(E)-\mathrm{rk}(E)) = \int_M \mathrm{c}_1(E).
\end{equation*}
Hence, $\kcw(M)$ and $\aar(M)$ coincide in this case.
Using (\ref{eq:area-M}), we can see that
\begin{equation*}
  \acw(M,TM)=\sup _{\widetilde{M}} \mathrm{area}(\widetilde{M}) = +\infty > \aar(M).
\end{equation*}

\subsubsection*{Case 2: $M$ is compact and the universal covering of $M$ is compact}
First, we note that if $M$ is compact, $\aar(M)$ is just a small variant of~\cite[Definition~1.6]{CZ21}.\footnote{In fact, if $M$ carries a metric with the positive scalar curvature, these two definitions are the same.}

Since the universal covering of $M$ is compact, any covering space of $M$, $\widetilde{M}$, is also compact.
Then by the same proof of~\cite[p.~33~(d)]{Gr101}, we have
\begin{equation*}
  \aar(M) = \aar{(\widetilde{M})}.
\end{equation*}
Therefore, in this case, $\acw(M,TM)$ is equal to $\aar(M)$.

\subsubsection*{Case 3: $M$ is compact and the universal covering of $M$ is non-compact}
This case is the most difficult and we do not have a definite answer at the moment.
In fact, whether $\aar(M)$ is equal to $\acw(M,TM)$ in this case relates closely to Gromov's question~\cite[p.~34, Question~23]{Gr101}: is there a closed manifold $M$ such that $\kcw(M) < \infty$ and the universal covering of $M$ satisfies $\kcw(\widetilde{M}) = \infty$?

The main difficulty in this case is that the pull-back and push-forward construction for vector bundles do not work well for the non-compact spaces.
If we put some restrictions on the covering spaces, maybe some partial results are still possible.
For example, if we assume $\pi_1(M)$ is residually finite, we have the following simple extension of~\cite[p.~26, (v')]{Gr96}.

\begin{prop}
  If $M$ is a closed manifold and $\pi_1(M)$ is residully finite, for the universal covering space $\widetilde{M}$ of $M$,
  \begin{equation*}
\kccw(M)\ge \kcw(M) \ge \kcw{(\widetilde{M})} = \kccw(\widetilde{M}).
  \end{equation*}
\end{prop}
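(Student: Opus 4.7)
The plan is to establish the chain $\kccw(M)\ge \kcw(M)\ge \kcw(\widetilde{M})=\kccw(\widetilde{M})$ piece by piece, with the middle inequality carrying essentially all of the substance.

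The outer bound $\kccw(M)\ge \kcw(M)$ is immediate from the definition: the trivial one-sheeted covering $\widetilde{M}=M$ is admissible in the supremum defining $\kccw(M)$, so every bundle that witnesses a value of $\kcw(M)$ simultaneously witnesses the same value of $\kccw(M)$. At the other end, the equality $\kcw(\widetilde{M})=\kccw(\widetilde{M})$ uses that $\widetilde{M}$ is simply connected as a universal cover; hence every connected covering of $\widetilde{M}$ is a diffeomorphism and the two suprema are taken over identical collections of bundles.

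For the middle step $\kcw(M)\ge \kcw(\widetilde{M})$, I would fix an arbitrary admissible Hermitian bundle $(E,\nabla^E)$ on $\widetilde{M}$, trivial at infinity, with some non-vanishing Chern number, and descend it to a well-chosen finite intermediate cover. Let $K\subset\widetilde{M}$ be a compact set outside of which $E$ is flat and canonically trivialized, and set $\Gamma:=\pi_1(M)$ and
\[
S:=\{\gamma\in \Gamma\setminus\{1\} \mid \gamma K\cap K\neq \emptyset\}.
\]
Since $\Gamma$ acts properly discontinuously on $\widetilde{M}$, the set $S$ is finite; residual finiteness of $\Gamma$ then furnishes a finite-index normal subgroup $N\trianglelefteq \Gamma$ with $N\cap S=\emptyset$. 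The intermediate cover $M_N:=\widetilde{M}/N$ is a finite cover of $M$, and the projection $\pi_N:\widetilde{M}\to M_N$ is injective on a neighborhood of $K$. Using the canonical trivialization of $E$ on $\widetilde{M}\setminus K$, $E$ descends to a Hermitian bundle $(E_N,\nabla^{E_N})$ on $M_N$ with $\|R^{E_N}\|=\|R^E\|$ and with characteristic number equal to that of $E$, because the relevant integral is localized inside the diffeomorphic image $\pi_N(K)$. Finally, invoking Gromov's finite-cover identity \cite[p.~26, (v')]{Gr96} gives $\kcw(M)\ge \kcw(M_N)\ge \|R^{E_N}\|^{-1}=\|R^E\|^{-1}$, and the supremum over $E$ yields the required inequality.

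The main technical obstacle is the well-definedness of the descent $E\mapsto E_N$: one must glue the restriction of $E$ to a neighborhood of $K$ with the trivialization of $E$ at infinity in an $N$-equivariant manner, and the condition $N\cap S=\emptyset$ is precisely what makes this gluing consistent, as it forces the $N$-translates of $K$ to be pairwise disjoint. A secondary point to verify is the preservation of a non-vanishing characteristic number under the finite covering $M_N\to M$, which is encapsulated in Gromov's (v') and ultimately rests on the transfer/pushforward compatibility of Chern characters under finite étale maps.
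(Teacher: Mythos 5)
Your proposal is correct and takes essentially the same approach as the paper: the substance is in the middle inequality $\kcw(M)\ge \kcw(\widetilde{M})$, handled by pushing forward a bundle $E$ over $\widetilde{M}$ (trivial outside a compact $K$) to a finite intermediate cover on which the projection is injective near $K$, and then invoking Gromov's finite-cover identity $\kcw(N)=\kcw(M)$. The only difference is cosmetic: you spell out the standard residual-finiteness argument (finiteness of $S=\{\gamma\neq 1:\gamma K\cap K\neq\emptyset\}$ and choosing $N$ avoiding $S$), whereas the paper simply asserts the existence of the required finite cover.
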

\begin{proof}
  Clearly, we only need to show that
  \begin{equation}
    \label{eq:pf}
    \kcw(M) \ge \kcw{(\widetilde{M})}
  \end{equation}
  If $\widetilde{M}$ is compact, (\ref{eq:pf}) follows from the push-forward inequality~\cite[\S~4$\frac{3}{5}$]{Gr96}.
  We try to show that the push-forward argument also works for non-compact $\widetilde{M}$.

  Take a vector bundle $E$ over $\widetilde{M}$ which is trivial outside a compact set $K$.
  Since $\pi_1(M)$ is residully finite, we can find a finite covering of $M$, $N$, such that the covering map from $\widetilde{M}$ to $N$ is injective on $K$.
  As a result, we can push forward the vector bundle $E$ to a vector bundle $E_N$ over $N$.
  Since $N$ is a finite covering of $M$, we have
  \begin{equation*}
    \kcw(N) = \kcw(M).
  \end{equation*}
  Therefore,
  \begin{equation*}
    \|R^{{E}}\| ^{-1}= \|R^{{E_N}}\|^{-1} \le \kcw(N) = \kcw(M),
  \end{equation*}
  from which we obtain (\ref{eq:pf}).
\end{proof}

We also note that the $\mathrm{K}$-cowaist also generalizes \cite[Definition 5.1]{BH19}.

\section{Proof of Theorem \ref{thm:4.1}}
\setcounter{equation}{0}

In this section we prove Theorem \ref{thm:4.1}.
Our strategy follows the proof of~\cite[Theorem~1.2]{SWZ} closely. Note that in~\cite{SWZ}, there is a map $f:M \rightarrow S^n(1)$, which enables us to construct suitable bundles over a closed manifold associated with the non-compact manifold $M$.
Compared to~\cite{SWZ}, the new idea in this paper is that we will show that in the current situation, the auxiliary bundles needed for the proof, as well as the endomorphisms between the these bundles, can be constructed \emph{without using the map $f$}.

We argue by contradiction.
Assume that
\begin{equation*}
    \inf(k^F)> {\frac{2{\rm rk}(F)({\rm rk}(F)-1)}{\acw(M,F)}}.
\end{equation*}
  Then, by the definition of $\acw(M,F)$, there exists
  \begin{itemize}
  \item a covering manifold $\widetilde{\pi}:\widetilde{M}\to M$ with the lifted foliation $\widetilde{F}$ and the lifted metrics $g^{T\widetilde{M}}$ and $g^{\widetilde{F}}$,

  \item a Hermitian vector bundle $E_0$ over $\widetilde{M}$ with the Hermitian metric $g^{E_0}$ and the Hermitian connection $\nabla^{E_0}$, which is trivial at infinity and satisfies
  \begin{align}\label{84}
    \int_{\widetilde{M}}\widehat{\mathrm{A}}(T\widetilde{M})\left({\rm ch}(E_0)-{\rm rk}(E_0)\right)\neq 0,
  \end{align}

  \item a constant $\kappa>0$ such that
    \begin{align}\label{4.3}
      \widetilde{\pi}^*(k^F)-2{\rm rk}(F)\left({\rm rk}(F)-1\right)\|R^{E_0}_{\widetilde{F}}\|> \kappa \text{ on }\widetilde{M}.
    \end{align}
   
  \end{itemize}

As explained in Introduction, we only prove the $TM$ spin case in detail. In the following, we assume that $TM$ is spin.

To begin, we note that if both $M$ and $\widetilde{M}$ are compact, by \cite[Section 1.1]{Z19}, one gets a contradiction easily.
Therefore, in the following, we assume that $\widetilde{M}$ is noncompact.
For the rest of the proof we will only deal with quantities associated with $\widetilde{M}$ and $\widetilde{F}$.
To simplify the notation, we will denote the foliation $(\widetilde{M},\widetilde{F})$ by $(M,F)$ and the metrics $(g^{T\widetilde{M}},g^{\widetilde{F}})$ by $(g^{TM},g^F)$.

Roughly speaking, we will prove Theorem~\ref{thm:4.1} in three steps.
\begin{enumerate}[label=(\roman*)]
\item We construct a closed manifold $\widehat{M}_{H_{3m}}$ and a $\mathbf{Z}_2$-graded bundle $\widehat{E}$ over it. We also construct a fiber bundle $\widehat{\mM}_{\mH_{3m},R}$ over $\widehat{M}_{H_{3m}}$ associated with the foliation $F$.
\item We construct a deformed Dirac operator on $\widehat{\mM}_{\mH_{3m},R}$ and obtain some estimates about it.
\item We construct a closed manifold using $\widehat{\mM}_{\mH_{3m},R}$ and an operator $P^{\mE_{3m,R}}_{R,\beta,\gamma,+}$ using the deformed Dirac operator. We will show that (\ref{84}) implies the index of $P^{\mE_{3m,R}}_{R,\beta,\gamma,+}$ is not zero while (\ref{4.3}) implies the index of $P^{\mE_{3m,R}}_{R,\beta,\gamma,+}$ is zero. Thus we obtain a contradiction.
\end{enumerate}

As we have said at the beginning of this section, the main difference between the proof of Theorem~\ref{thm:4.1} and~\cite[Theorem~1.2]{SWZ} is the first step.

\subsubsection*{Step 1.}
Let $(E_1={M}\times {\bf C}^k,g^{E_1},\nabla^{E_1})$, with $k={\rm rk}(E_0)$, be the trivial vector bundle on ${M}$.
Then, let $E=E_0\oplus E_1$ be a ${\bf Z}_2$-graded Hermitian vector bundle over ${M}$ with a ${\bf Z}_2$-graded metric $g^E=g^{E_0}\oplus g^{E_1}$ and a ${\bf Z}_2$-graded Hermitian connection $\nabla^{E}=\nabla^{E_0}\oplus \nabla^{E_1}$.

Since $(E_0,g^{E_0},\nabla^{E_0})$ is trivial at infinity, there exists a compact subset $K$\footnote{We can and will choose $K$ is a closure of an open subset.} and an isomorphism
\begin{equation}
  \label{eq:def-psi}
  \psi: (E_0|_{{M} \setminus K}, g^{E_0}, \nabla^{E_0}) \rightarrow (({M} \setminus K)\times \mathbf{C}^{k}, g_{\mathrm{st}}, \nabla_{\mathrm{st}}) = (E_1|_{{M} \setminus K}, g^{E_1}, \nabla^{E_1}).
\end{equation}

Following \cite[Theorem 1.17]{GL83}, we choose a fixed point $x_0\in M$ and let $d:M\to \bf{R}^+$ be a regularization of the distance function ${\rm dist}(x,x_0)$ such that
\begin{equation*}
  \label{eq:fun-d}
  |\nabla d|(x)\leq 3/2,
\end{equation*}
for any $x\in M$.

Set
\begin{equation*}\label{728}
  B_m=\{x\in M: d(x)\leq m\},\ m\in \bf{N}
\end{equation*}
and choose a sufficiently large $m$ such that $K\subset B_m$.
Note that $B_m$ is compact due to the completeness of $g^{TM}$.

To construct the desired closed manifold $\widehat{M}_{H_{3m}}$, following \cite{GL83}, we take a compact hypersurface $H_{3m}\subseteq M\setminus B_{3m}$, which cuts $M$ into two parts such that the compact part, denoted by $M_{H_{3m}}$, contains $B_{3m}$. Then $M_{H_{3m}}$ is a compact smooth manifold with boundary $H_{3m}$.
Let $g^{TH_{3m}}$ be the induced metric on $H_{3m}$.
For a sufficiently small $\varepsilon'>0$, on the product manifold $H_{3m} \times [-\varepsilon',1+ \varepsilon']$, we construct a metric as follows.

Near the boundary $H_{3m}\times \{-\varepsilon'\}$ of $H_{3m} \times [-\varepsilon', 1+ \varepsilon']$, i.e.,\ on $H_{3m} \times [-\varepsilon', \varepsilon']$, by using the geodesic normal coordinate of $H_{3m} \subseteq M$, for a small $\varepsilon'$, there is an isomorphism between $H_{3m} \times [-\varepsilon', \varepsilon']$ and a neighborhood of $H_{3m}$, denoted by $U$, in $M$ because $H_{3m}$ is compact.
Moreover, we can require that under this isomorphism, $U\cap M_{H_{3m}}$ is mapped to $H_{3m}\times [-\varepsilon', 0]$.

Now, we define the metric on $H_{3m} \times [-\varepsilon', \varepsilon']$ to be the pull-back metric obtained from that of $U$.
In the same way, we can construct a metric on $H_{3m} \times [1-\varepsilon', 1+ \varepsilon']$.
Meanwhile, the metric on $H_{3m}\times [1/3,2/3]$ is defined to be the product metric of $g^{TH_{3m}}$ and the standard metric on $[1/3,2/3]$. 
Finally, the metric on $H_{3m} \times [-\varepsilon',1+\varepsilon']$ is a smooth extension of the metrics on the above three pieces. 

Using the isometry between $H_{3m} \times [-\varepsilon', \varepsilon']$ and $U$, $M_{H_{3m}} \cup U$ and $H_{3m} \times [-\varepsilon',1+\varepsilon']$ can be glued into a smooth Riemannian manifold with boundary,
\begin{equation*}
  (M_{H_{3m}} \cup U) \bigcup_{U} (H_{3m} \times [-\varepsilon',1+\varepsilon']).
\end{equation*}
More precisely, the resulting manifold is a quotient space of $(M_{H_{3m}} \cup U) \bigcup (H_{3m} \times [-\varepsilon',1+\varepsilon'])$, two points of $M_{H_{3m}} \cup U$ and $H_{3m} \times [-\varepsilon',1+\varepsilon']$ respectively correspond to the same point in the resulting manifold if and only if the two points are related by the isomorphism between $U$ and $H_{3m} \times [-\varepsilon',\varepsilon']$.

Let $M'_{H_{3m}}$ be another copy of $M_{H_{3m}}$ with the same metric and the opposite orientation.
In a similar way, we can glue $M'_{H_{3m}}$ and $(M_{H_{3m}} \cup U) \cup_{U} (H_{3m} \times [-\varepsilon',1+\varepsilon'])$ to obtain a closed manifold $\widehat{M}_{H_{3m}}$.
Note that $M_{H_{3m}} \cup U$, $M'_{H_{3m}}$ and $H_{3m}\times [1/3,2/3]$ all have natural isometric embeddings into $\widehat{M}_{H_{3m}}$.
As a result, we will view these manifolds as submanifolds of $\widehat{M}_{H_{3m}}$ in the following.
Figure~\ref{fig:glue} is an illustration of this gluing construction.

\begin{figure}[htbp]
  \centering
  \includegraphics[scale=1.2]{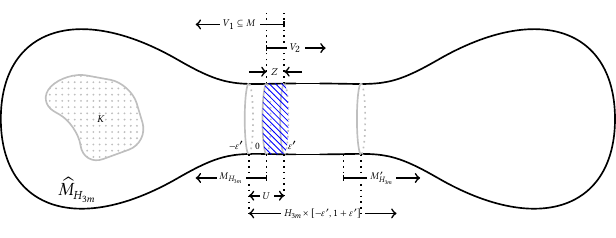}
  \caption{Gluing construction.}
  \label{fig:glue}
\end{figure}

Take $V_1 \coloneqq M_{H_{3m}} \cup U^{\circ}$ and $V_2 = \widehat{M}_{H_{3m}} \setminus M_{H_{3m}}$ to be two open subsets of $\widehat{M}_{H_{3m}}$.
Then
\begin{equation*}
  V_1 \cup V_2 = \widehat{M}_{H_{3m}},\quad V_1 \cap V_2 = Z \coloneqq H_{3m} \times (0,\varepsilon').
\end{equation*}
As we have said, we will treat $V_1 = M_{H_{3m}} \cup U^{\circ}$ as a submanifold of both $M$ and $\widehat{M}_{H_{3m}}$.
This means that, although $E$ is a $\mathbf{Z}_2$-graded vector bundle defined over $M$, its restriction on ${M_{H_{3m}}\cup U^{\circ}}$, that is $E|_{M_{H_{3m}}\cup U^{\circ}} = E|_{V_1}$, is a $\mathbf{Z}_2$-graded vector bundle defined over a submanifold of $\widehat{M}_{H_{3m}}$.
We are going to extend $E|_{V_1}$ to a $\mathbf{Z}_2$-graded bundle $\widehat{E} = \widehat{E}_0 \oplus \widehat{E}_1$ over $\widehat{M}_{H_{3m}}$ which satisfies
\begin{equation}
  \label{eq:e-ext}
  (\widehat{E}_0|_{V_2} , g^{\widehat{E}_0}, \nabla^{\widehat{E}_0}) \simeq (V_2 \times \mathbf{C}^k,g_{\mathrm{st}}, \nabla_{\mathrm{st}}) 
  \simeq (\widehat{E}_1|_{V_2}, g^{\widehat{E}_1}, \nabla^{\widehat{E}_1}).
\end{equation}

We will construct the $\widehat{E}_0$ and $\widehat{E}_1$ separately.
The construction of $\widehat{E}_1$ is straightforward.
Since
\begin{equation*}
  E_1|_{M_{H_{3m}}} = M_{H_{3m}} \times \mathbf{C}^k,
\end{equation*}
we can take $(\widehat{E}_1, g^{\widehat{E}_1}, \nabla^{\widehat{E}_1})$ to be $(\widehat{M}_{H_{3m}} \times \mathbf{C}^k,g_{\mathrm{st}}, \nabla_{\mathrm{st}})$, which satisfies (\ref{eq:e-ext}).

To construct $(\widehat{E}_0, g^{\widehat{E}_0}, \nabla^{\widehat{E}_0})$, we glue two vector bundles as in \cite{A}.
Choose the trivial bundle
\begin{equation*}
  (E'_0, g^{E'_0}, \nabla^{E'_0}) \coloneqq (V_2 \times \mathbf{C}^k, g_{\mathrm{st}}, \nabla_{\mathrm{st}})
\end{equation*}
over $V_2$.
Recall that $Z \subset U$ can be viewed as a submanifold of $M$ and $Z \cap K = \emptyset$.
Hence, by the definition of $\psi$, (\ref{eq:def-psi}), we have an isomorphism between $(E_0|_{Z}, g^{E_0}, \nabla^{E_0})$ (i.e., $((E_0|_{V_1})|_{Z}, g^{E_0}, \nabla^{E_0})$) and $(Z \times \mathbf{C}^k, g_{\mathrm{st}}, \nabla_{\mathrm{st}})$ (i.e., $(E'_0|_Z, g^{E'_0}, \nabla^{E'_0})$).
In other words, the restriction of $\psi$ on $Z$, denoted by $\psi|_Z$, induces an isomorphism between $((E_0|_{V_1})|_{Z}, g^{E_0}, \nabla^{E_0})$ and $(E'_0|_Z, g^{E'_0}, \nabla^{E'_0})$.
We define $\widehat{E}_0$ to be
\begin{equation*}
  \widehat{E}_0 \coloneqq E_0|_{V_1} \bigcup_{\psi|_Z} E'_0|_{V_2}.
\end{equation*}
By definition, $\psi$, thus $\psi|_Z$, preserves the metric and the connection.
As a result, $\widehat{E}_0$ inherits a metric and a connection from those of $E_0|_{V_1}$ and $E'_0|_{V_2}$.
Moreover, the property of gluing construction implies,
\begin{gather*}
  (\widehat{E}_0|_{M_{H_{3m}}}, g^{\widehat{E}_0}, \nabla^{\widehat{E}_0}) \simeq (E_0|_{M_{H_{3m}}}, g^{E_0}, \nabla^{E_0}),\\
  (\widehat{E}_0|_{V_2}, g^{\widehat{E}_0}, \nabla^{\widehat{E}_0}) \simeq (E'_0, g^{E'_0}, \nabla^{E'_0}) = (V_2 \times \mathbf{C}^k,g_{\mathrm{st}}, \nabla_{\mathrm{st}}).
\end{gather*}
Therefore, $\widehat{E}_0$ also satisfies (\ref{eq:e-ext}).

Let $i: E_0|_{V_1} \hookrightarrow \widehat{E}_0$ and $j: E'_0|_{V_2} \hookrightarrow \widehat{E}_0$ be the canonical embeddings in the gluing construction.
The definition of gluing construction implies the composition of maps
\begin{equation*}
  j|_Z^{-1} \circ i|_Z:(E_0|_{Z}, g^{E_0}, \nabla^{E_0}) \xhookrightarrow{i|_Z} (\widehat{E}_{0}|_Z, g^{\widehat{E}_0}, \nabla^{\widehat{E}_0}) \xhookrightarrow{j|_Z^{-1}} (E'_0|_Z,g_{\mathrm{st}}, \nabla_{\mathrm{st}}) = (Z \times \mathbf{C}^k, g_{\mathrm{st}}, \nabla_{\mathrm{st}})
\end{equation*}
is just $\psi|_Z$.
Since $\widehat{E}_1|_{V_2} = V_2 \times \mathbf{C}^k$, we can define
\begin{equation*}
  \nu\coloneqq j^{-1}: (\widehat{E}_0|_{V_2}, g^{\widehat{E}_0}, \nabla^{\widehat{E}_0}) \xrightarrow{\sim} (E'_0, g^{E'_0}, \nabla^{E'_0}) = (V_2 \times \mathbf{C}^k, g_{\mathrm{st}}, \nabla_{\mathrm{st}}) = (\widehat{E}_1|_{V_2}, g^{\widehat{E}_1}, \nabla^{\widehat{E}_1}).
\end{equation*}
At the same time, $\psi$ induces the map
\begin{multline*}
  \psi|_{V_1} \circ i^{-1}: (\widehat{E}_0|_{V_1}, g^{\widehat{E}_0}, \nabla^{\widehat{E}_0}) \xrightarrow{\sim} ({E}_0|_{V_1}, g^{E_0}, \nabla^{E_0}) \rightarrow ({E}_1|_{V_1}, g^{E_1}, \nabla^{E_1}) \\
  = (V_1 \times \mathbf{C}^k, g_{\mathrm{st}}, \nabla_{\mathrm{st}}) = (\widehat{E}_1|_{V_1}, g^{\widehat{E}_1}, \nabla^{\widehat{E}_1}).
\end{multline*}
Therefore, we have
\begin{equation}
  \label{eq:two-sec}
  (\psi|_{V_1} \circ i^{-1})|_Z = \psi|_Z \circ i|_Z^{-1} = j|_Z^{-1} \circ i|_Z \circ i|_Z^{-1} = j|_Z^{-1} = \nu|_Z.
\end{equation}
Note that the bundle endomorphism $\psi|_{V_1} \circ i^{-1}$ is just a smooth section of ${\rm Hom}(\widehat{E}_0,\widehat{E}_1)$ over $V_1$.
Similarly, the bundle endomorphism $\nu$ is a smooth section of ${\rm Hom}(\widehat{E}_0,\widehat{E}_1)$ over $V_2$.
Consequently, by (\ref{eq:two-sec}), the following section $\omega$ of ${\rm Hom}(\widehat{E}_0,\widehat{E}_1)$ is well defined and smooth,
\begin{equation*}
  \omega_x =
  \begin{cases}
    (\psi|_{V_1} \circ i^{-1})_x, & x \in V_1, \\
    \nu_x, & x \in V_2.
  \end{cases}
\end{equation*}
Moreover, by the property of $\psi$ and $\nu$, we know that $\omega$ preserves the metrics and connections on $\widehat{M}_{H_{3m}} \setminus K$.

Take $\omega^*$ to be the adjoint of $\omega$ with respect to $g^{\widehat{E}_0}$ and $g^{\widehat{E}_1}$.
Set 
\begin{equation*}\label{0.5}
  W=\omega+\omega^*: \Gamma(\widehat{E})\rightarrow \Gamma(\widehat{E}),
\end{equation*}
which is an odd and self-adjoint bundle endomorphism of $\widehat{E}$.
There exists a constant $\delta>0$ such that 
\begin{equation}\label{2.11a}
  W^2\geq \delta \text{ on } \widehat{M}_{H_{3m}} \setminus K.
\end{equation}

After the construction of $\widehat{M}_{H_{3m}}$ and $\widehat{E}$, we now explain how to construct a fiber bundle $\widehat{\mM}_{\mH_{3m},R}$ over $\widehat{M}_{H_{3m}}$ associated with the foliation $F$.
Let $F^\perp$ be the orthogonal complement to $F$, i.e., we have the orthogonal splitting 
 \begin{equation}\label{0.3}
TM=F\oplus F^\perp,\ \ \ g^{TM}=g^F\oplus g^{F^\perp}.
\end{equation}

Following  \cite[\S 5]{Co86} (also cf. \cite[\S 2.1]{Z17}), let $ \pi:\mM\rightarrow M$ be the Connes
fibration over $M$ such that for any $x\in M$, $\mM_{x}=\pi^{-1}(x)$
is the space of Euclidean metrics on the linear space $T_xM/F_{x}$.
Let  $T^V\mM$ denote the vertical tangent bundle of the fibration
$\pi:\mM\rightarrow M$. Then it carries a natural metric
$g^{T^V\mM}$ such that   any two points $p,\,q\in \mM_{x}$ with $x\in M$ can be joined by a unique geodesic along $\mM_{x}$. Let $d^{\mM_{x}}(p,q)$ denote the length of this geodesic.  
  
By  using the Bott connection
  on $TM/F$ (cf. \cite[(1.2)]{Z17}), which is leafwise flat, one  lifts $F$ to an integrable subbundle
$\mF$ of $T\mM$. 
  Then $g^{F}$   lifts to a Euclidean metric $g^{\mF}= \pi^* g^{F} $ on $\mF$.

  Let $\mF_{1}^\perp\subseteq T\mM$ be a subbundle, which is  transversal to $\mF\oplus T^V\mM$,   such that we have a
  splitting
  \begin{equation*}
    T\mM=(\mF \oplus T^V \mM)\oplus\mF_{1}^\perp.
  \end{equation*}
Then
$\mF_{1}^\perp$ can be identified with $T\mM/(\mF \oplus T^V \mM)$
and carries a canonically induced metric $g^{\mF_{1}^\perp}$.
We let $\mF_{2}^\perp$ denote $T^V\mM$.

  The metric $g^{F^\perp}$   in (\ref{0.3}) determines a canonical embedded section $s: M\hookrightarrow \mM$.
For any $p\in\mM$, set $\rho(p)=d^{\mM_{\pi(p)}}(p,s(\pi(p) ))$.

For any $  \beta,\ \gamma>0$,  following  \cite[(2.15)]{Z17}, let $g_{\beta,\gamma}^{T\mM}$ be the   metric    on $T\mM$  defined by
the
  orthogonal splitting,
\begin{equation}\label{0.6}\begin{split}
       T\mM =   \mF\oplus \mF^\perp_{1}\oplus \mF^\perp_{2},  \ 
\  \  \
g^{T\mM}_{\beta,\gamma}= \beta^2   g^{\mF}\oplus\frac{
g^{\mF^\perp_{1}}}{ \gamma^2 }\oplus g^{\mF^\perp_{2}}.\end{split}
\end{equation}

For any $R>0$, let $ \mM_{R}$ be the smooth manifold with boundary
defined by
\begin{equation*}\label{0.7} 
\mM_{R}=\left\{ p\in \mM\ :\  \rho(p)\leq R \right\}.
\end{equation*}

Set $\mH_{3m}= \pi^{-1}(H_{3m})$ and 
\begin{equation*}\label{0.8} 
\mM_{\mH_{3m},R} = \left(\pi^{-1}
\left(M_{H_{3m}}\right)\right)\cap \mM_{R},\; { \mH_{3m,R} = \mH_{3m}\cap \mM_{R}}.
\end{equation*}

Consider another copy $\mM_{\mH_{3m},R} '$ of $\mM_{\mH_{3m},R} $ carrying the metric $g^{T\mM'_{3m,R}}$ defined by (\ref{0.6}) with $\beta=\gamma=1$.
Meanwhile, let $g^{T\mH_{3m,R}}$ be the induced metric on $\mH_{3m,R}$ by (\ref{0.6}) with $\beta=\gamma=1$ and ${\rm d}t^2$ be the standard metric on $[1/3,2/3]$.
As we have done for $\widehat{M}_{H_{3m}}$, we can glue $\mM_{\mH_{3m},R} $, $\mM_{\mH_{3m},R} '$ and $\mH_{3m,R}\times [-\varepsilon', 1+ \varepsilon']$ together to get a manifold $\widehat{\mM}_{\mH_{3m},R}$, cf.~\cite[Section~2.2]{SWZ}.
But, unlike $\widehat{M}_{H_{3m}}$, $\widehat{\mM}_{\mH_{3m},R}$ is a smooth manifold with boundary.
Moreover, we can define a smooth metric $g^{T\widehat{\mM}_{\mH_{3m},R}}$ on $\widehat{\mM}_{\mH_{3m},R}$ such that
\begin{equation*}
  \label{eq:metric}
  \begin{aligned}[]
    g^{T\widehat{\mM}_{\mH_{3m},R}}|_{\mM_{\mH_{3m},R}} &= g^{T\mM_{3m,R}}_{\beta,\gamma},\\
    g^{T\widehat{\mM}_{\mH_{3m},R}}|_{\mM'_{\mH_{3m},R}} &= g^{T\mM'_{3m,R}},\\
    g^{T\widehat{\mM}_{\mH_{3m},R}}|_{\mH_{3m,R}\times [1/3,2/3]} &= g^{T\mH_{3m,R}}\oplus {\dd t^2}.
  \end{aligned}
\end{equation*}

The map $\pi:\mM_{\mH_{3m},R}\to M_{H_{3m}}$ can be extended to $\widehat{\mM}_{\mH_{3m},R}\to \widehat{M}_{H_{3m}}$ and we still denote the extended map by $\pi$.
As before, we pull the bundles on $\widehat{M}_{H_{3m}}$ back to $\widehat{\mM}_{\mH_{3m},R}$, that is, we take
\begin{equation*}
  (\mE_{3m,R},\nabla^{\mE_{3m,R}},g^{\mE_{3m,R}}) = \pi^*(\widehat{E},\nabla^{\widehat{E}},g^{\widehat{E}}).
\end{equation*}
As usual, $R^{\mE_{3m,R}}=(\nabla^{\mE_{3m,R}})^2$ is the curvature of $\nabla^{\mE_{3m,R}}$.

\subsubsection*{Step 2.}

Recall that we have assumed that $TM$ is oriented and spin, which implies that $\mF\oplus\mF^\perp_{1} =\pi^*(TM)$ is spin. Without loss of generality, as in~\cite[p. 1062-1063]{Z17}, we assume {further that} $F$ is oriented {and ${\rm rk}(F^\perp)$ is divisible by $4$}. Then $F^\perp$  is also oriented and $\dim \mM$ is even.

It is clear that $\mF\oplus \mF^\perp_{1},\, \mF^\perp_{2}$  over $\mM_{\mH_{3m},R}$ can be extended to $(\mH_{3m,R}\times [-\varepsilon', 1+ \varepsilon'])\cup \mM_{\mH_{3m},R}'$ such that we have the orthogonal splitting\footnote{$\mF$ restricted to $(\mH_{3m,R}\times [-\varepsilon', 1+ \varepsilon'])\cup \mM_{\mH_{3m},R}'$ needs no longer to be integrable.}
\begin{align}\label{0.10}
T\widehat \mM_{\mH_{3m},R}= \left(\mF\oplus\mF^\perp_{1} \right)\oplus \mF^\perp_{2}\ \  {\rm on}\ \ \widehat \mM_{\mH_{3m},R}.
\end{align}

Let $S_{\beta,\gamma}(\mF\oplus \mF^\perp_{1})$ denote the spinor bundle over $\widehat \mM_{\mH_{3m},R}$ with respect to the metric  $g^{T\widehat\mM_{\mH_{3m},R}}|_{\mF \oplus\mF^\perp_{1}} $ (thus with respect to
$\beta^2g^{\mF}\oplus \frac{g^{\mF^\perp_{1}}}{\gamma^2}$ on $\mM_{\mH_{3m},R}$). Let $\Lambda^* (\mF_{2}^\perp )$ denote the exterior algebra bundle of $\mF_{2}^{\perp,*}$, with the   ${\bf Z}_2$-grading given by the natural even/odd parity.

Let
\begin{equation}
  D _{\mF\oplus\mF_{1}^\perp,\beta,\gamma}:\Gamma (S_{\beta,\gamma} (\mF\oplus\mF_{1}^\perp)\widehat\otimes \Lambda^* (\mF_{2}^\perp ) ) \rightarrow \Gamma (S_{\beta,\gamma} (\mF\oplus\mF_{1}^\perp)\widehat \otimes \Lambda^* (\mF_{2}^\perp ) )
\end{equation}
be the sub-Dirac operator on $\widehat \mM_{\mH_{3m},R}$ constructed as in \cite[(2.16)]{Z17}.  
It is clear that one can canonically define the twisted sub-Dirac operator (twisted by $\mE_{3m,R}$) on $\widehat \mM_{\mH_{3m},R}$,
\begin{multline}\label{0.11}
D ^{\mE_{3m,R}}_{\mF\oplus\mF_{1}^\perp,\beta,\gamma}:\Gamma  (S_{\beta,\gamma}  (\mF\oplus\mF_{1}^\perp)\widehat\otimes
\Lambda^* (\mF_{2}^\perp )\widehat \otimes \mE_{3m,R})
\\
\rightarrow
\Gamma  (S_{\beta,\gamma}  (\mF\oplus\mF_{1}^\perp)\widehat\otimes
\Lambda^*(\mF_{2}^\perp )\widehat \otimes \mE_{3m,R} ).
\end{multline}

Let $\widetilde{f}:[0,1]\rightarrow [0,1]$ be a smooth function such that  $\widetilde{f}(t)= 0$ for $0\leq t\leq \frac{1}{4}$, while $\widetilde{f}(t) =1$ for $   \frac{1}{2}\leq t\leq 1$.  
For any $p\in \mM_{\mH_{3m},R}$, we connect $p$ and $s(\pi(p))$ by the unique geodesic in $\mM_{ \pi(p)}$. Let $\sigma(p)\in \mF_{2}^\perp|_p$ denote the unit vector tangent to this geodesic. Then  
\begin{align*}\label{0.13}
 \widetilde \sigma =\widetilde{ f}\left(\frac{\rho}{R}\right)\sigma
\end{align*}
is a smooth section of $\mF_{2}^\perp|_{\mM_{\mH_{3m},R}}$. It extends to a smooth section of $\mF_{2}^\perp|_{\widehat\mM_{\mH_{3m},R}}$, which we still denote by $\widetilde\sigma$.  It is easy to see that we may and we will assume that $\widetilde\sigma$ is transversal to (and thus nowhere zero on) $\partial \widehat\mM_{\mH_{3m},R}$.
Note that the Clifford action $\widehat c(\widetilde\sigma)$ (cf. \cite[(1.47)]{Z17}) now acts on $S_{\beta,\gamma}   (\mF\oplus\mF_{1}^\perp )\widehat\otimes
\Lambda^* (\mF_{2}^\perp  )\widehat \otimes \mE_{3m,R} $ over $\widehat\mM_{\mH_{3m},R}$.

With $\widehat c(\widetilde\sigma)$ and $W$, for $\varepsilon>0$, we introduce the following deformation of  $D ^{\mE_{3m,R}}_{\mF\oplus\mF_{1}^\perp,\beta,\gamma}$ on 
$\widehat\mM_{\mH_{3m},R}$, which combines the deformations in \cite[(2.21)]{Z17} and \cite[(1.11)]{Z19},

\begin{equation}
  \label{eq:defd}
 D ^{\mE_{3m,R}}_{\mF\oplus\mF_{1}^\perp,\beta,\gamma}
+\frac{\widehat c(\widetilde\sigma)}{\beta}
+\frac{\varepsilon \pi^*W}{\beta}.
\end{equation}

For the deformed operator (\ref{eq:defd}), the following estimate holds, which is an analog of \cite[Lemma 2.1]{SWZ}.
Let $h:[0,1]\rightarrow [0,1]$ be a smooth function such that $h(t)=1$ for $0\leq t\leq \frac{3}{4}$, while $h(t)=0$ for $\frac{7}{8}\leq t\leq 1$. 

\begin{lemma}\label{t0.4}
  There exist $c_0>0$, $\varepsilon>0$, $m>0$ and $R>0$ such that when $\beta,\,\gamma>0$ are small enough (which may depend on $m$ and $R$), 
  \begin{enumerate}[label=\emph{({\roman*})}]
  \item for any $s\in \Gamma  (S_{\beta,\gamma}   (\mF\oplus\mF_{1}^\perp )\widehat\otimes
  \Lambda^* (\mF_{2}^\perp )\widehat \otimes \mE_{3m,R}  )$ supported in the interior of $\widehat\mM_{\mH_{3m},R}$,\footnote{The norms below   depend on $\beta$ and $\gamma$. In case of no confusion, we omit the subscripts for simplicity.}
  \begin{equation*}\label{0.16}
    \Big\|\Big(D ^{\mE_{3m,R}}_{\mF\oplus\mF_{1}^\perp,\beta,\gamma}
    +\frac{\widehat c(\widetilde\sigma)}{\beta}
    +\frac{\varepsilon \pi^*W}{\beta}\Big)s\Big\|\geq \frac{c_0 }{\beta}\|s\|;
  \end{equation*}
  \item for any $s\in \Gamma  (S_{\beta,\gamma}   (\mF\oplus\mF_{1}^\perp )\widehat\otimes
  \Lambda^* (\mF_{2}^\perp )\widehat \otimes \mE_{3m,R}  )$
  supported in the interior of $\mM_{\mH_{3m},R}\setminus  \mM_{\mH_{3m},{R}/{2}}$,
  \begin{equation*}\label{0.17}
    \Big\|\Big(h\left(\frac{\rho}{R}\right) D ^{\mE_{3m,R}}_{\mF\oplus\mF_{1}^\perp,\beta,\gamma}
    h\left(\frac{\rho}{R}\right) 
    +\frac{\widehat c\left(\widetilde\sigma\right)}{\beta}
    +\frac{\varepsilon \pi^*W}{\beta}\Big)s\Big\|\geq \frac{c_0 }{\beta}\|s\|.
  \end{equation*}
    \end{enumerate}
\end{lemma}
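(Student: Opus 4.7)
The plan is to prove both inequalities by squaring the relevant operators, applying a Lichnerowicz-type formula to the sub-Dirac part, and combining the resulting positive contributions of order $1/\beta^2$ to dominate every error term once $\varepsilon$, $m$, $R$ are fixed appropriately and $\beta,\gamma$ are taken small. For part (i), I would first expand the square of the operator in (\ref{eq:defd}). Using the sub-Dirac Lichnerowicz formula of \cite[\S 2]{Z17} in its twisted form, $(D^{\mE_{3m,R}}_{\mF\oplus\mF_1^\perp,\beta,\gamma})^2$ equals a nonnegative connection Laplacian, plus $\pi^*k^F/(4\beta^2)$, plus a twist-curvature term whose absolute value is bounded by $\mathrm{rk}(F)(\mathrm{rk}(F)-1)\|R^{\mE_{3m,R}}_{\mF}\|/(2\beta^2)$, plus remainders controlled by powers of $\gamma$ and by the second fundamental forms of the splitting~(\ref{0.10}). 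The Clifford square contributes $|\widetilde\sigma|^2/\beta^2=\widetilde f(\rho/R)^2/\beta^2$, and $(\varepsilon\pi^*W/\beta)^2=\varepsilon^2\pi^*(W^2)/\beta^2$ which by~(\ref{2.11a}) is at least $\varepsilon^2\delta/\beta^2$ outside $\pi^{-1}(K)$.

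I would then decompose $\widehat\mM_{\mH_{3m},R}$ into regions and exhibit a uniform positive lower bound of order $1/\beta^2$ on each. On $\pi^{-1}(K)\subseteq \mM_{\mH_{3m},R}$ the bundle $\widehat E$ agrees with $E$, so (\ref{4.3}) yields $\pi^*k^F-2\mathrm{rk}(F)(\mathrm{rk}(F)-1)\|R^{\mE_{3m,R}}_{\mF}\|\ge\kappa$, and the Lichnerowicz contribution alone dominates. On $\mM_{\mH_{3m},R}\setminus \pi^{-1}(K)$ both (\ref{4.3}) and (\ref{2.11a}) are in force, which only strengthens the bound. On the neck $\mH_{3m,R}\times[-\varepsilon',1+\varepsilon']$ and on the reflected copy $\mM'_{\mH_{3m},R}$, although $\mF$ may fail to be integrable and the sign of $k^F$ is no longer at our disposal, these regions lie outside $\pi^{-1}(K)$ so that $\varepsilon^2\pi^*(W^2)/\beta^2\ge\varepsilon^2\delta/\beta^2$ alone dominates the remaining Lichnerowicz terms once $\varepsilon$ is fixed and $\beta$ is small.

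The off-diagonal terms produced by the squaring are a factor of $\beta$ smaller than the diagonal ones: the anticommutator with $\widehat c(\widetilde\sigma)/\beta$ yields Clifford actions of $\nabla\widetilde\sigma$ of norm $O(1/R)$, while the anticommutator with $\varepsilon\pi^*W/\beta$ yields Clifford actions of $\varepsilon\pi^*\nabla W$ bounded independently of $\beta$. Both are therefore absorbed by first choosing $R$ large, then $\varepsilon$ small enough that $\varepsilon|\nabla W|\le\varepsilon^2\delta/2$ on $\widehat M_{H_{3m}}\setminus K$, and finally $\beta,\gamma$ sufficiently small. Integration by parts on sections supported in the interior of $\widehat\mM_{\mH_{3m},R}$ then yields the claimed estimate (i).

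For part (ii) the same region-by-region estimate applies, with the additional simplification that on the support of $s$ one has $\rho\ge R/2$, so $\widetilde f(\rho/R)=1$ and the Clifford square contributes the full $1/\beta^2$ directly. The symmetrization $h(\rho/R)D^{\mE_{3m,R}}_{\mF\oplus\mF_1^\perp,\beta,\gamma}h(\rho/R)$ introduces only commutators with $h$, of order $1/R$, supported where $h'\ne 0$; these are subleading and absorbed in the same way. The main obstacle in both parts is carrying out the Lichnerowicz computation carefully on the glued manifold where~(\ref{0.10}) is no longer induced by an integrable foliation, so that new curvature and torsion terms of the extended splitting must be tracked, together with the bookkeeping of the $\beta$-, $\gamma$-, $R$- and $\varepsilon$-dependence of every remainder so that a single consistent choice of constants handles (i) and (ii) simultaneously.
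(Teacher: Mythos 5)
There is a genuine gap: your plan is a single global Lichnerowicz computation with a region-by-region potential bound, but the paper's argument hinges on an IMS-type localization that you omit, and without it the approach you sketch does not close.

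First, a quantitative error. You claim the anticommutator with $\varepsilon\pi^*W/\beta$ produces terms ``bounded independently of $\beta$'' and hence ``a factor of $\beta$ smaller than the diagonal ones.'' In the $\beta$-rescaled metric the leafwise part of the sub-Dirac operator already carries a factor $1/\beta$, so the anticommutator $\{D^{\mE_{3m,R}}_{\mF\oplus\mF_1^\perp,\beta,\gamma},\,\varepsilon\pi^*W/\beta\}$ is of size $O(\varepsilon/\beta^2)$ (cf.\ \eqref{2.26a}); it is the same order as the Lichnerowicz term $\kappa/(4\beta^2)$, not subleading. Your proposed absorption ``$\varepsilon|\nabla W|\leq\varepsilon^2\delta/2$'' also runs the wrong way: for $\varepsilon\to 0$ this forces $|\nabla W|\leq\varepsilon\delta/2\to 0$, which is false where $\nabla W\neq 0$. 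Moreover, $\nabla W$ is supported on $K$, precisely where $W^2\geq\delta$ is \emph{not} available, so this term must instead be absorbed into the $\kappa/(4\beta^2)$ coming from \eqref{4.3} by choosing $\varepsilon$ small relative to $\kappa$ — a step that must be made explicit and that your region decomposition does not supply.

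Second, and more structurally: on the neck $\mH_{3m,R}\times[-\varepsilon',1+\varepsilon']$ and on the reflected copy $\mM'_{\mH_{3m},R}$ there is no integrable foliation and no sign control on the extended splitting's curvature, so the sub-Dirac Lichnerowicz remainder there is a genuinely uncontrolled term of the same rescaled order as $\varepsilon^2\delta/\beta^2$ once one runs the full Bochner formula on $\widehat\mM_{\mH_{3m},R}$. You acknowledge this (``new curvature and torsion terms of the extended splitting must be tracked'') but offer no way around it. The paper avoids this entirely: it introduces Gromov--Lawson cut-off functions $\varphi_{m,1},\varphi_{m,2}$ with $|\nabla\varphi_{m,i}|=O(1/m)$, splits $s$ as $\varphi_{m,1}s+\varphi_{m,2}s$, and — crucially — on the piece $\varphi_{m,2}s$ supported outside $\pi^{-1}(B_m)\supseteq\pi^{-1}(K)$, it does \emph{not} apply Lichnerowicz at all. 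It uses the identity \eqref{721}: since $W$ is parallel there, the cross term vanishes and $\bigl(D^{\mE_{3m,R}}_{\mF\oplus\mF_1^\perp,\beta,\gamma}+\hat c(\widetilde\sigma)/\beta\bigr)^2\geq 0$ simply because it is the square of a symmetric operator, so the bound $\varepsilon^2\delta/\beta^2$ comes for free without ever facing the neck's curvature. The Lichnerowicz estimate \eqref{2.35j} is then applied only to $\varphi_{m,1}s$, supported where the foliation and \eqref{4.3} live. Without this localization and the switch between the two descriptions of the squared operator, your ``single consistent choice of constants'' does not exist: to dominate the neck remainders you would need $\varepsilon$ large (depending on $m,R$), while to dominate the $W$-commutator on $K$ you need $\varepsilon$ small relative to $\kappa$ — incompatible requirements.
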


\begin{proof}
  We follow the same strategy of~\cite[Lemma~2.1]{SWZ} to prove this lemma.
  Especially, the proof of (ii) is the same with the proof of~\cite[Lemma~2.1 (ii)]{SWZ}.
  Here, we only show how to modify the arguments in~\cite[Lemma~2.1]{SWZ} to prove (i).

  As in~\cite[(2.14)--(2.15)]{SWZ}, on $\widehat{M}_{H_{3m}}$, by using the regularized distance function $d(x)$, we can find cut-off functions $\psi_{m,1},\, \psi_{m,2}: \widehat{M}_{H_{3m}}\rightarrow [0,1]$ satisfying for $i=1,2$,
  \begin{gather}
    \psi^2_{m,1} + \psi^2_{m,2} = 1,\label{eq:sum}\\
    |\nabla \psi_{m,i}|(x)\leq {C/m} \text{ for any }x\in \widehat{M}_{H_{3m}}\label{eq:est-psi}.
  \end{gather}
  Then we pull $\psi_{m,1},\psi_{m,2}$ back to cut-off functions $\varphi_{m,1},\varphi_{m,2}$ defined on $\widehat \mM_{\mH_{3m},R}$.
  Due to~\cite[(2.24)]{SWZ}, we know that
  \begin{equation}
    \label{0.22}
    \begin{gathered}[]
      \varphi_{m,1}=1 \text{ if } x\in \pi^{-1}(B_m) \text{ and } \varphi_{m,1}=0 \text{ if } x\in \widehat{\mM}_{\mH_{3m},R}\setminus \pi^{-1}(B_{2m}),\\
      \varphi_{m,2}=0 \text{ if } x\in \pi^{-1}(B_m) \text{ and } \varphi_{m,2}=1 \text{ if } x\in \widehat{\mM}_{\mH_{3m},R}\setminus \pi^{-1}(B_{2m}). \\
    \end{gathered}
  \end{equation}

  Using $\varphi_{m,i}$, for any $s\in \Gamma (S_{\beta,\gamma} (\mF\oplus\mF_{1}^\perp )\widehat\otimes \Lambda^* (\mF_{2}^\perp )\widehat \otimes \mE_{3m,R} )$ supported in the interior of $\widehat\mM_{\mH_{3m},R}$, by (\ref{eq:sum}), we have the following estimate, cf.~\cite[(2.26)]{SWZ},
  \begin{multline}\label{0.22a}
    \sqrt{2}\Big\|\Big(D ^{\mE_{3m,R}}_{\mF\oplus\mF_{1}^\perp,\beta,\gamma}+\frac{\widehat c(\widetilde\sigma)}{\beta} + \frac{\varepsilon \pi^*W}{\beta}\Big)s\Big\| \ge 
    \Big\|\Big(D ^{\mE_{3m,R}}_{\mF\oplus\mF_{1}^\perp,\beta,\gamma}+\frac{\widehat c(\widetilde\sigma)}{\beta} + \frac{\varepsilon \pi^*W}{\beta}\Big)(\varphi_{m,1}s)\Big\| \\
    + \Big\| \Big(D ^{\mE_{3m,R}}_{\mF\oplus\mF_{1}^\perp,\beta,\gamma}+\frac{\widehat c(\widetilde\sigma)}{\beta} + \frac{\varepsilon \pi^*W}{\beta}\Big)(\varphi_{m,2}s)\Big\|
    -\|c_{\beta,\gamma}\left({\rm d}\varphi_{m,1}\right)s\|
    -\|c_{\beta,\gamma}\left({\rm d}\varphi_{m,2}\right)s\|,
  \end{multline}
  where for each $i\in \{1,2\}$, we identify ${\dd \varphi_{m,i}}$ with the gradient of $\varphi_{m,i}$ and $c_{\beta,\gamma}(\cdot)$ means the Clifford action with respect to the metric (\ref{0.6}).

  For the last two terms in the right-hand side of (\ref{0.22a}), we can use the estimate~\cite[(2.30)]{SWZ},
  \begin{equation}\label{2.19}
    |c_{\beta,\gamma}(\dd \varphi_{m,i})s|(x) =\Big(O\left({1\over {\beta m}}\right)+O_{m,R}(\gamma)\Big) |s|(x), \quad x\in\mM_{\mH_{3m},R},
  \end{equation}
  where the subscripts in $O_{m,R}(\cdot)$ mean that the big O constant may depend on $m$ and $R$.

  For the first two terms in the right-hand side of (\ref{0.22a}), by a direct computation, we have
  \begin{multline}\label{721}
    \Big(D ^{\mE_{3m,R}}_{\mF\oplus\mF_{1}^\perp,\beta,\gamma}+\frac{\widehat c(\widetilde\sigma)}{\beta}
    +\frac{\varepsilon \pi^*W}{\beta}\Big)^2=\Big(D ^{\mE_{3m,R}}_{\mF\oplus\mF_{1}^\perp,\beta,\gamma}+\frac{\widehat c(\widetilde\sigma)}{\beta}   \Big)^2 \\
    +\Big[D ^{\mE_{3m,R}}_{\mF\oplus\mF_{1}^\perp,\beta,\gamma},{{\varepsilon \pi^*W}\over{\beta}}\Big]
    +\frac{\varepsilon^2 (\pi^*W)^2}{\beta^2}.
  \end{multline} 
  Meanwhile, since $W$ is a constant endomorphism, i.e.\, parallel with respect to the connection, outside $K$, we know
  \begin{equation}\label{2.27a}
    \Big[D^{\mE_{3m,R}}_{\mF\oplus\mF_{1}^\perp,\beta,\gamma},{{\varepsilon \pi^*W}\over{\beta}}\Big]= 0\text{ on }\widehat{\mM}_{\mH_{3m},R}\setminus {\pi^{-1}(K)}.
  \end{equation}
  Therefore, for the second term on the right-hand side of (\ref{0.22a}), by (\ref{2.11a}), (\ref{0.22}), (\ref{721}) and (\ref{2.27a}), one has
  \begin{multline}\label{2.33}
    \Big\|\Big(D ^{\mE_{3m,R}}_{\mF\oplus\mF_{1}^\perp,\beta,\gamma}+\frac{\widehat c(\widetilde\sigma)}{\beta}
    +\frac{\varepsilon \pi^*W}{\beta}\Big)(\varphi_{m,2}s)\Big\|^2\\
    = \Big\|\Big(D ^{\mE_{3m,R}}_{\mF\oplus\mF_{1}^\perp,\beta,\gamma}+\frac{\widehat c(\widetilde\sigma)}{\beta}\Big)(\varphi_{m,2}s)\Big\|^2 +
    \Big\|\frac{\varepsilon \pi^*W}{\beta}(\varphi_{m,2}s)\Big\|^2
    \geq {{\delta\varepsilon^2}\over{\beta^2}}\left\|\varphi_{m,2}s\right\|^2.
  \end{multline}

  The main difference between the proof of (i) and~\cite[Lemma~2.1 (i)]{SWZ} lies in the estimate of the first term in the right-hand side of (\ref{0.22a}).
  Let $\mathrm{rk}(F)=\mathrm{rk}(\mathcal{F})=q$, $\mathrm{rk}(\mathcal{F}_1^{\perp})= q_1$ and $\mathrm{rk}({\mathcal{F}_2^{\perp}}) = q_2$.
  Since on $\mM_{\mH_{3m},R}$, $g^{\mF} = \pi^* g^F$, for a local orthonormal basis $\{f_1,\dots,f_q\}$ of $(\mF, g^{\mF})$, we can choose it to be lifted from a local orthonormal basis of $(F, g^F)$.
  Moreover, we choose $h_1, \dots, h_{q_1}$ (resp.\ $e_1, \dots, e_{q_2}$) to be a local orthonormal basis of $(\mF^{\perp}_1, g^{\mF^{\perp}_1})$ (resp.\ $(\mF^{\perp}_2, g^{\mF^{\perp}_2})$).
  Then,
  \begin{equation*}
    \label{eq:loc-fr}
    \{f_1,\dots,f_q,h_1,\dots,h_{q_1},e_1,\dots,e_{q_2}\}
  \end{equation*}
  is a local orthonormal frame for $T\mM_{\mH_{3m},R}$.
  Then, by~\cite[(2.40)]{SWZ}, we have
  \begin{multline}\label{2.35j}
    \Big\|\big(D ^{\mE_{3m,R}}_{\mF\oplus\mF_{1}^\perp,\beta,\gamma}+\frac{\widehat c(\widetilde\sigma)}{\beta}
    +\frac{\varepsilon \pi^*W}{\beta}\Big)(\varphi_{m,1}s)\Big\|^2 \ge \Big({{\pi^*k^F}\over{4\beta^2}}\varphi_{m,1}s, \varphi_{m,1}s\Big) \\
    +\Big({1\over{2\beta^2}}\sum_{i,j=1}^{q}R^{\mE_{3m,R}}(f_i,f_j)c_{\beta,\gamma}(\beta^{-1}f_i)c_{\beta,\gamma}(\beta^{-1}f_j)\varphi_{m,1}s, \varphi_{m,1}s\Big) \\
    +\Big(\Big[D ^{\mE_{3m,R}}_{\mF\oplus\mF_{1}^\perp,\beta,\gamma},\frac{\widehat c(\widetilde\sigma)}{\beta}\Big]\varphi_{m,1}s, \varphi_{m,1}s\Big)
    +\Big(\Big[D ^{\mE_{3m,R}}_{\mF\oplus\mF_{1}^\perp,\beta,\gamma},{{\varepsilon \pi^*W}\over{\beta}}\Big]\varphi_{m,1}s,\varphi_{m,1}s\Big) \\
    +\Big( \frac{\varepsilon^2 (\pi^*W)^2}{\beta^2}\varphi_{m,1}s,\varphi_{m,1}s\Big)+ \Big(O_{m,R}\left({1\over \beta}+{\gamma^2\over \beta^2}\right)\varphi_{m,1}s, \varphi_{m,1}s\Big).
  \end{multline}

  To estimate the right-hand side of (\ref{2.35j}), we proceed term by term.
  \begin{enumerate}[label=(\alph*)]
  \item For the first two terms, by (\ref{eq:def-rn}) and (\ref{4.3}),
    \begin{multline}
      \label{eq:k-pos}
      \Big({1\over{2\beta^2}}\sum_{i,j=1}^{q}R^{\mE_{3m,R}}(f_i,f_j)c_{\beta,\gamma}(\beta^{-1}f_i)c_{\beta,\gamma}(\beta^{-1}f_j)\varphi_{m,1}s, \varphi_{m,1}s\Big)\\
      + \Big({\pi^*k^F\over{4\beta^2}}\varphi_{m,1}s,\varphi_{m,1}s\Big) \geq {\frac{\kappa}{4\beta^2}}\|\varphi_{m,1}s\|^2.
    \end{multline}
    Note that (\ref{eq:k-pos}) is the counterpart of~\cite[(2.37)--(2.38)]{SWZ} in our situation.
  \item For the third term, by~\cite[Lemma 2.1]{Z17}, on $\mM_{\mH_{3m},R}\setminus {s(M_{H_{3m}})}$, we have
    \begin{equation}\label{2.27}
      \Big[D ^{\mE_{3m,R}}_{\mF\oplus\mF_{1}^\perp,\beta,\gamma},\frac{\widehat c(\widetilde\sigma)}{\beta}\Big]=O_m\left({1\over \beta^2 R}\right)+O_{m,R}\left(1\over \beta\right).
    \end{equation}
  \item For the fourth term, since $\nabla^{\mE_{3m,R}}$ (resp.\ $\pi^*W$) is a  pull-back connection (resp.\ bundle endomorphism) via $\pi$, by~\cite[(2.34)]{SWZ} and (\ref{2.27a}), we have
    \begin{equation}\label{2.26a}
      \Big[D ^{\mE_{3m,R}}_{\mF\oplus\mF_{1}^\perp,\beta,\gamma},{{\varepsilon \pi^*W}\over{\beta}}\Big]=
      \begin{cases}
        O\left({\varepsilon\over \beta^2}\right)+O_{R}\left({\varepsilon\gamma\over \beta}\right) & \text{ on }\pi^{-1}(K),\\
        0,& \text{ on }\widehat{\mM}_{\mH_{3m},R}\setminus {\pi^{-1}(K)}.
      \end{cases}
    \end{equation}
  \item For the fifth term, by (\ref{2.11a}), we have
    \begin{equation}
      \label{eq:ft}
      \Big(\frac{\varepsilon^2 (\pi^*W)^2}{\beta^2}\varphi_{m,1}s,\varphi_{m,1}s\Big) \ge \frac{\varepsilon^2 \delta}{\beta^2}\|\varphi_{m,1}s\|^2_{\pi^{-1}(B_{2m}\setminus K)},
    \end{equation}
    where the subscript on the norm means the integral on ${\pi^{-1}(B_{2m}\setminus K)}$.
  \end{enumerate}
  
  Now, as in~\cite[Lemma~2.1]{SWZ}, we split every term on the right-hand side of (\ref{2.35j}) into integrals on $\pi^{-1}(K)$ and ${\pi^{-1}(B_{2m}\setminus K)}$ separately.
  At the same time, we choose $\varepsilon$ small enough that
  \begin{equation*}
    {\kappa\over 8\beta^2}\|s\|^2_{\pi^{-1}(K)} +O\left(\varepsilon\over \beta^2\right)\|s\|^2_{\pi^{-1}(K)}\geq 0.
  \end{equation*}
  Then, by (\ref{eq:k-pos})--(\ref{eq:ft}), we have
  \begin{multline}\label{2.34}
    \Big\|\Big(D ^{\mE_{3m,R}}_{\mF\oplus\mF_{1}^\perp,\beta,\gamma}+\frac{\widehat c(\widetilde{\sigma})}{\beta}
    +\frac{\varepsilon \pi^*W}{\beta}\Big)(\varphi_{m,1}s)\Big\|^2
    \geq \min\Big\{{\kappa\over 8},\delta\varepsilon^2\Big\}\frac{\|\varphi_{m,1} s\|^2}{\beta^2}\\
    +O_{R}\left(\varepsilon\gamma\over \beta\right)\|\varphi_{m,1} s\|^2_{\pi^{-1}(K)}+O_{m,R}\left(\gamma^2\over \beta^2\right)\|\varphi_{m,1}s\|^2\\
    +O_{m,R}\left(1\over \beta\right)\|\varphi_{m,1}s\|^2+O_{m}\left({1\over \beta^2 R}\right)\|\varphi_{m,1}s\|^2.
  \end{multline}
  By (\ref{0.22a}), (\ref{2.33}) and (\ref{2.34}), by taking $m$ sufficiently
  large and then taking $R$ sufficiently large, one finds that there exist
  $c_0,\varepsilon, m, R>0$ such that when $\beta, \gamma>0$ are small enough, the estimate in (i) of Lemma~\ref{t0.4} holds.
\end{proof}

\subsubsection*{Step 3.}

Let $\partial \widehat \mM_{\mH_{3m},R}$ bound another oriented manifold $  \mN_{3m,R}$ such that
\begin{equation*}
  \widetilde  \mN_{3m,R}=\widehat \mM_{\mH_{3m},R}\cup \mN_{3m,R}
\end{equation*}
is an oriented closed manifold. 
Let $g_{}^{T\widetilde \mN_{3m,R}}$ be a smooth metric on $T\widetilde \mN_{3m,R}$ such that
\begin{equation*}
  g_{}^{T\widetilde \mN_{3m,R}}
  |_{\widehat\mM_{\mH_{3m},R}}=g_{}^{T\widehat\mM_{\mH_{3m},R} }.
\end{equation*}
The existence of $g_{}^{T\widetilde \mN_{3m,R}}$ is clear.

Let $Q$ be a Hermitian vector bundle over $\widehat \mM_{\mH_{3m},R}$ such that
\begin{equation*}
  (S_{\beta,\gamma}  (\mF\oplus\mF_{1}^\perp)\widehat\otimes \Lambda^*(\mF_{2}^\perp )\widehat \otimes \mE_{m,R})_{-}\oplus Q
\end{equation*}
is a trivial vector bundle over $\widehat \mM_{\mH_{3m},R}$. Then 
\begin{equation*}
(S_{\beta,\gamma}  (\mF\oplus\mF_{1}^\perp)\widehat\otimes \Lambda^*(\mF_{2}^\perp )\widehat \otimes \mE_{m,R})_{+}\oplus Q
\end{equation*}
is also a trivial vector bundle near $\partial \widehat \mM_{\mH_{3m},R}$ under the identification $\widehat{c}(\widetilde{\sigma})+\pi^* \omega+{\rm Id}_Q$.

Since the above two vector bundles are both trivial near $\partial \widehat \mM_{\mH_{3m},R}$, by extending them via the trivial bundle over $\widetilde  \mN_{3m,R} \setminus \widehat \mM_{\mH_{3m},R}$, we get a $\mathbf{Z}_2$-graded Hermitian vector bundle $\xi=\xi_{+}\oplus \xi_-$ over $\widetilde  \mN_{3m,R}$ and an odd self-adjoint endomorphism $\mathcal{W}'=\omega'+\omega'^*\in \Gamma({\rm End}(\xi))$ (with $\omega':\Gamma(\xi_+)\to \Gamma(\xi_-)$, $\omega'^*$ being the adjoint of $\omega'$) such that
\begin{equation*}
  \xi_{\pm}= (S_{\beta,\gamma}  (\mF\oplus\mF_{1}^\perp)\widehat\otimes \Lambda^*(\mF_{2}^\perp )\widehat \otimes \mE_{m,R})_{\pm }\oplus Q
\end{equation*}
over $\widehat \mM_{\mH_{3m},R}$, $\mathcal{W}'$ is invertible on $\mN_{3m,R}$ and 
\begin{align}\label{0.27}
  \mW'=\widehat{c}(\widetilde{\sigma})+\pi^*W+
  \begin{pmatrix}0 & {\rm Id}_Q\\
    {\rm Id}_Q & 0\\
  \end{pmatrix}
\end{align}
on $\widehat \mM_{\mH_{3m},R}$, which is invertible on $\widehat \mM_{\mH_{3m},R}\setminus  \mM_{\mH_{3m},{R/ 2}}$.

Recall that $h({\rho}/{R})$ vanishes near $\mM_{\mH_{3m},R}\cap \partial  \mM_{R}$. We extend it to a function on $\widetilde \mN_{ 3m,R}$ which equals zero on $\mN _{3m,R}$ and an open neighborhood of $\partial \widehat \mM_{\mH_{3m},R}$ in $\widetilde \mN_{ 3m,R}$, and we denote the resulting function on $\widetilde \mN_{3m,R}$ by $\widetilde h_R$.

Let $\pi_{\widetilde \mN_{3m, R}}: T\widetilde \mN_{3m,R}\to \widetilde \mN_{3m, R}$ be the projection of the tangent bundle of $\widetilde \mN_{3 m,R}$. Let $\gamma^{\widetilde \mN_{3 m,R}}\in {\rm Hom} (\pi^*_{\widetilde \mN_{3m, R}}\xi_+,\pi^*_{\widetilde \mN_{ 3m,R}}\xi_-)$ be the symbol defined by 
\begin{align}\label{0.28}
  \gamma^{\widetilde \mN_{ 3m,R}}(p,u)=\pi^*_{\widetilde \mN_{3m, R}}\big(\sqrt{-1} \widetilde h ^2_R c_{\beta,\gamma}(u)+\omega'(p)\big)\ \ {\rm for}\ \ p\in \widetilde \mN_{ 3m,R},\ \ u\in T_p \widetilde \mN_{ 3m,R}.
\end{align}
By (\ref{0.27}) and (\ref{0.28}), $\gamma^{\widetilde\mN_{3 m,R}}$ is singular only if $u=0$ and $p\in\mM_{\mH_{3m},{R}/{2}}$. Thus $\gamma^{\widetilde \mN_{ 3m,R}}$ is an elliptic symbol.

On the other hand, it is clear that $\widetilde h_R  D ^{\mE_{3m,R}}_{\mF\oplus\mF_{1}^\perp,\beta,\gamma}\widetilde h_R$ is well defined on $\widetilde \mN_{ 3m,R}$ if we define it to be zero on $\widetilde \mN_{ 3m,R}\setminus \widehat \mM_{\mH_{3m},R}$.

Let $A: L^2 (\xi)\to L^2 (\xi)$ be a second order positive elliptic differential operator on $\widetilde \mN_{ m,R}$ preserving the ${\bf Z}_2$-grading of $\xi=\xi_+\oplus \xi_-$, such that its symbol equals to $|\eta|^2$ at $\eta\in T\widetilde \mN_{ 3m,R}$.\footnote{To be more precise, here $A$ also depends on the defining metric. We omit the corresponding subscript/superscript only for convenience.}
As in \cite[(2.33)]{Z17}, let $P^{\mE_{3m,R}}_{ R,\beta,\gamma}:L^2 (\xi)\to L^2 (\xi)$ be the zeroth order pseudodifferential operator on $\widetilde \mN_{3m, R}$ defined by
\begin{align}\label{0.29}
P^{\mE_{3m,R}}_{R,\beta,\gamma}=A^{-\frac{1}{4}}\widetilde h_R D ^{\mE_{3m,R}}_{\mF\oplus\mF_{1}^\perp,\beta,\gamma}\widetilde h_R A^{-\frac{1}{4}}+\frac{\mW'}{\beta}.
\end{align}
Let $P^{\mE_{3m,R}}_{R,\beta,\gamma,+}:L^2(\xi_+)\to L^2(\xi_-)$ be the obvious restriction.
Then the principal symbol of $P^{\mE_{3m,R}}_{R,\beta,\gamma,+}$, which we denote by $\gamma(P^{\mE_{3m,R}}_{R,\beta,\gamma,+})$, is homotopic through elliptic symbols to $\gamma^{\widetilde \mN_{3m, R}}$. Thus $P^{\mE_{3m,R}}_{R,\beta,\gamma,+}$ is a Fredholm operator.
Moreover, the index of the symbol $\gamma^{\widetilde \mN_{3m, R}}$ can be calculated by the Atiyah-Singer index theorem directly (cf. \cite{ASI} and \cite[Proposition III.
11.24]{LaMi89}).
Therefore,
\begin{multline}
  \label{eq:ind1}
{\rm ind}\Big(P^{\mE_{3m,R}}_{R,\beta,\gamma,+}\Big)={\rm ind}\Big(\gamma\big(P^{\mE_{3m,R}}_{R,\beta,\gamma,+}\big)\Big)
={\rm ind}\Big(\gamma^{\widetilde \mN_{ 3m,R}}\Big)\\
=\Big\langle\widehat{\mathrm{A}}(T\widehat{M}_{H_{3m}})({\rm ch}(\widehat{E}_0)-{\rm ch}(\widehat{E}_1)),[\widehat{M}_{H_{3m}}]\Big\rangle=\Big\langle\widehat{\mathrm{A}}(T{M})({\rm ch}(E_0)-{\rm ch}(E_1)),[M]\Big\rangle\neq 0,
\end{multline}
where the inequality comes from (\ref{84}).

For any $0\leq t\leq 1$, set
\begin{align}\label{0.31}
  P^{\mE_{3m,R}}_{R,\beta,\gamma,+}(t)=P^{\mE_{3m,R}}_{R,\beta,\gamma,+}+\frac{(t-1)\omega'}{\beta}+A^{-\frac{1}{4}}\frac{(1-t)\omega'}{\beta}A^{-\frac{1}{4}}.
\end{align}
Then $P^{\mE_{3m,R}}_{R,\beta,\gamma,+}(t)$ is a smooth family of zeroth order pseudodifferential operators such that the corresponding symbol $\gamma(P^{\mE_{3m,R}}_{R,\beta,\gamma,+}(t))$ is elliptic for $0<t\leq 1$. Thus $P^{\mE_{3m,R}}_{R,\beta,\gamma,+}(t)$ is a continuous family of Fredholm operators for $0<t\leq 1$ with $P^{\mE_{3m,R}}_{R,\beta,\gamma,+}(1)=P^{\mE_{3m,R}}_{R,\beta,\gamma,+}$.

Then, by using Lemma~\ref{t0.4}, the exactly same arguments in~\cite[Proposition 2.2]{SWZ} show that for suitable $\varepsilon, m, R, \beta, \gamma>0$,
\begin{equation*}\label{0.32}
  \dim\Big({\rm ker}\big(P^{\mE_{3m,R}}_{R,\beta,\gamma,+}(0)\big)\Big)=\dim\Big({\rm ker}\big(P^{\mE_{3m,R}}_{R,\beta,\gamma,+}(0)^*\big)\Big)=0.
\end{equation*}
As a result, when $t=0$, $P^{\mE_{3m,R}}_{R,\beta,\gamma,+}(0)$ is also Fredholm and has a vanishing index.
By the property of Fredholm index, we have
\begin{equation*}
  {\rm ind}{\Big(P^{\mE_{3m,R}}_{R,\beta,\gamma,+}\Big)} = {\rm ind}\Big(P^{\mE_{3m,R}}_{R,\beta,\gamma,+}(1)\Big)  = {\rm ind}\Big(P^{\mE_{3m,R}}_{R,\beta,\gamma,+}(0)\Big) = 0,
\end{equation*}
which contradicts (\ref{eq:ind1}) and the proof of Thoerem~\ref{thm:4.1} is completed.

\vspace{\baselineskip}

\noindent{\bf Acknowledgments.}
G.\ Su was partially supported by NSFC grant 12425106, 11931007 and 12271266, the Nankai Zhide Foundation and the Fundamental Research Funds for the Central Universities grant 63243068.
X.\ Wang was partially supported by NSFC grant 12471049 and 12101361, the Project of Young Scholars of Shandong University.

\bibliographystyle{amsplain}
\providecommand{\bysame}{\leavevmode\hbox to3em{\hrulefill}\thinspace}
\providecommand{\MR}{\relax\ifhmode\unskip\space\fi MR }
\providecommand{\MRhref}[2]{\href{http://www.ams.org/mathscinet-getitem?mr=#1}{#2}
}
\providecommand{\href}[2]{#2}

\end{document}